\def\qed{\ifhmode\textqed\fi
	\ifmmode\ifinner\hfill\quad\qedsymbol\else\dispqed\fi\fi}
\def\textqed{\unskip\nobreak\penalty50
	\hskip2em\hbox{}\nobreak\hfill\qedsymbol
	\parfillskip=0pt \finalhyphendemerits=0}
\def\dispqed{\rlap{\qquad\qedsymbol}}
\setlist[enumerate]{%
	label={\normalfont\arabic*.},
	left={\parindent},
	itemsep={5pt}%
}%
\newcommand{\MBB}{\mathbb}
	\def\ZZ{\MBB Z} 
\newcommand{\MC}{\mathcal}
	\def\cC{\MC C}
	\def\dD{\MC D}
\newcommand{\MRM}{\mathrm}
	\def\m{\MRM m}
\let\union=\cup
\let\sect=\cap
\let\dirsum=\oplus
\let\isom=\cong
\let\Sect=\bigcap
\let\epsilon=\varepsilon
\let\phi=\varphi
\let\to=\rightarrow
\def\Implies{\ifmmode\Longrightarrow \else
	\unskip${}\Longrightarrow{}$\ignorespaces\fi}
\def\implies{\ifmmode\Rightarrow \else
	\unskip${}\Rightarrow{}$\ignorespaces\fi}
\def\iff{\ifmmode\Longleftrightarrow \else
	\unskip${}\Longleftrightarrow{}$\ignorespaces\fi}
\DeclareMathOperator{\depth}{depth}
\DeclareMathOperator{\Hom}{Hom}
\DeclareMathOperator{\Ext}{Ext}
\DeclareMathOperator{\Cone}{Cone}
\theoremstyle{plain}{%
	\newtheorem{Theorem}{Theorem}[section]
	\newtheorem{Lemma}[Theorem]{Lemma}
	\newtheorem{Proposition}[Theorem]{Proposition}
	\newtheorem{Corollary}[Theorem]{Corollary}
	\newtheorem*{Question}{Question}	

}%
\theoremstyle{definition}{%
	\newtheorem{Definition}[Theorem]{Definition}
	\newtheorem{Example}[Theorem]{Example}
	
	\newtheorem{Remark}[Theorem]{Remark}
}%
\def\CM{{Cohen-Macaulay}}
\def\SCM{{sequentially Cohen-Macaulay}}
\begin{document}
\title{Sequentially Cohen-Macaulay binomial edge ideals}
\author{Ernesto Lax, Giancarlo Rinaldo, Francesco Romeo}

\address{Ernesto Lax, Department of mathematics and computer sciences, physics and earth sciences, University of Messina, Viale Ferdinando Stagno d'Alcontres 31, 98166 Messina, Italy}
\email{erlax@unime.it}

\address{Giancarlo Rinaldo, Department of mathematics and computer sciences, physics and earth sciences, University of Messina, Viale Ferdinando Stagno d'Alcontres 31, 98166 Messina, Italy}
\email{giarinaldo@unime.it}

\address{Francesco Romeo, Department of Electrical and Information Engineering ``Maurizio Scaranò'', University of Cassino and Southern Lazio, 03043 Cassino, Italy}
\email{francesco.romeo@unicas.it}

\subjclass{Primary 13C14, 13F70, 05C25, 05C40}
\keywords{Binomial edge ideals, cycles, wheels, block graphs, cones, modules of deficiency, sequentially Cohen-Macaulay}

\dedicatory{To J{\"u}rgen Herzog (1941-2024)}

\begin{abstract}
	We prove that wheels and block graphs have sequentially Cohen-Macaulay binomial edge ideals. Moreover, we provide a construction of new families of sequentially Cohen-Macaulay graphs by cones.
\end{abstract}

\maketitle

\section*{Introduction}
Let $G$ be a simple graph with the vertex set $[n]$ and the edge set $E(G)$. The notion of binomial edge ideals was introduced by Herzog et al. in \cite{HHHKR}, and independently by Ohtani in \cite{O}. Many algebraic properties and invariants of such ideals were described \textit{via} the combinatorics of the underlying graph, see \cite{HHO_book} for a nice survey on this topic. The most important one is the so-called cutset property. Let $G\setminus T$ denote the graph obtained by removing the vertices of $T\subset [n]$. $T$ is called a {\em cutset} if the number of connected components of $G\setminus T$, denoted by $c(T)$, satisfies $c(T)>c(T\setminus\{v\})$ for any $v\in T$. We denote by $\cC(G)$ the set of all cutsets for $G$. For example, the Krull dimension of the ring induced by the binomial edge ideal is 
\begin{equation}\label{Eq:dimension}
	d=\max \{n +c(T) - |T|\ :\ T\in \cC(G)\}.
\end{equation}

One of the main topics of the study of binomial edge ideals is to classify the {\CM} ones. As an example, after their introduction, the first paper is \cite{EHH} where the main result is to classify the block graphs that are {\CM}. After that, many authors focused on the problem (see \cite{BNB},\cite{BMS2018},\cite{EHH},\cite{RR}, \cite{RS}). Recently, Gorenstein binomial edge ideals (\cite{G-M}) and licci binomial edge ideals (\cite{ERT2020}) have been characterized. 
A full classification of graphs whose binomial edge ideals are {\CM} is not yet known. However, significant progress in this direction has been recently made in \cite{BMRS2024},\cite{BMRS2025},\cite{BMS2022},\cite{KSM2012},\cite{KSM2015},\cite{LMRR2023},\cite{SS2023}. A nice and deeply studied generalization of the {\CM} ring is the {\SCM} one (see \cite{Stan_book}).

As in the case of the {\CM} property, for studying {\SCM} binomial edge ideals, one can reduce to connected indecomposable graphs as proved in \cite{ERT2022b}. In particular, in \cite{ERT2022b}, the authors classify the binomial edge ideals with quadratic Gröbner bases that are {\SCM}, that is those ideals which are associated with closed graphs (see \cite{EHH}). Two are the main ingredients to obtain such result: 1) a characterization of Goodarzi \cite{G} on {\SCM} homogeneous ideals; 2) the initial ideal of closed graphs. In \cite{SZ}, the authors classified the complete bipartite graphs that are {\SCM} studying the deficiency modules (see \cite{S_book}).

In Section 1, we focus on the graphs with one cutset. In particular we use the Goodarzi's filtration and the deficiency modules. An interesting observation of this section is obtained by the following invariant
\[
\m(G)=\min\{n+c(T)-|T| \ :\ T\in \cC(G)\}.
\]
If $\m(G) = n+1$, namely $\m(G)=\dim S/P_{\emptyset}$, and $J_G$ is {\SCM}, then $G$ has at least a cutpoint, namely a cutset of cardinality 1 (see Proposition \ref{prop:SCM-mindim_onecutvertex}).

In Section \ref{sec:2}, we focus on the family of graphs that are blocks, namely graphs without cutpoints. Hence, in this case $\m(G)\leq n$ and we prove that cycles ($\m(G)=n$) and wheels ($\m(G)=n-1$) are {\SCM} by using the Goodarzi's filtration (see also \cite{Z}).

In Section \ref{sec:3}, we prove that block graphs are {\SCM}. We underline that in general the filtration of Goodarzi gives us new ideals, induced by the primary decomposition of $J_G$, that are not anymore binomial edge ideals. Nevertheless, using elementary containment problem (see Lemma 3.1) by good properties of the block graphs, it is possible to control the filtration using the Mayer--Vietoris sequence (see Theorem 3.2).

In \cite{RR}, the authors provide a useful tool to generate new Cohen-Macaulay graphs using cones. As an application, this has been used to completely classify all bipartite {\CM} graphs \cite{BMS2018}. In Section \ref{sec:4}, we extend this result to {\SCM} property, obtaining that a cone is {\SCM} if and only if its components are {\SCM} (see Theorem \ref{thm:cones}).

\section{Graphs with only one non--empty cutset}\label{sec:1}
Let $G$ be a graph with the vertex set $V(G)=[n]=\{1,\ldots,n\}$ and the edge set $E(G)$. A subset $C\subset V(G)$ is called a {\em clique} of $G$ if for all $i,j \in C$ with $i\neq j$ one has $\{i,j\} \in E(G)$. A {\em free vertex} of $G$ is a vertex that belongs to exactly one maximal clique of $G$, with respect to inclusion. A {\em block} of $G$ is a connected subgraph of $G$ that has no cutpoints, which is maximal with respect to this property. A {\em block graph} is a graph in which every block is a complete graph. A connected graph $G$ is {\em decomposable} if there exist two subgraphs $G_1, G_2$ of $G$ such that $G=G_1 \union G_2$ and $V(G_1)\sect V(G_2)=\{v\}$, where $v$ is a free vertex of $G_1$ and $G_2$.

Let $K$ be a field and $S=K[x_1,\ldots,x_n,y_1,\ldots,y_n]$ be the standard graded polynomial $K$-algebra in $2n$ indeterminates. The {\em binomial edge ideal} of $G$ is the ideal $J_G$ of $S$ generated by all binomials $f_{ij}=x_iy_j-x_jy_i$, such that $i<j$ and $\{i,j\}\in E(G)$.

\begin{Definition}\label{def:SCM_definition}
    A $S$-module $M$ is called {\em\SCM} if there exist a finite filtration of submodules of $M$
    \begin{equation*}
        0 = M_0 \subset M_1 \subset \cdots \subset M_{d-1} \subset M_r = M   
    \end{equation*}
    such that each quotient $M_i/M_{i-1}$ is {\CM} and $\dim M_i/M_{i-1} < \dim M_{i+1}/M_i$ for all $i$.
\end{Definition}

A filtration satisfying the above conditions is called a {\em{\SCM} filtration} of $M$. It is easy to see that if the {\SCM} filtration exists, then it is unique (see \cite{S}, \cite{Stan_book}). One can easily see that any {\CM} module $M$ is {\SCM}, with {\SCM} filtration $0=M_0 \subset M_1 = M$. Hence, one has\par\medskip

\noindent
\begin{center}
	{\CM}\quad {$\Implies$}\quad {\SCM},
\end{center}\medskip
but the converse is not true in general.\par\medskip

Constructing the {\SCM} filtration for a module $M$ may not be easy. Nevertheless many authors in the years described different ways to characterize the {\SCM} property for homogeneous ideals/modules.

We first focus on a homological characterization of the {\SCM} property due to Schenzel (see \cite{S_book}), involving the notion of modules of deficiency.

\begin{Definition}
	Let $M$ be a finitely generated graded $S$-module and put $d=\dim{M}$. For an integer $i\in\ZZ$ the {\em $i$th module of deficiency} of $M$ is defined as it follows
	\[
	\omega^{i}(M)=\Ext^{2n-i}_S(M,S(-2n)).
	\]
	If $i=d$, the module $\omega^{d}(M)=\omega(M)$ is the {\em canonical module} of $M$.
\end{Definition}

The modules of deficiency were introduced and studied in \cite{S_book}. Note that, by the graded version of Local Duality, the $i$th module of deficiency is the Matlis dual of the $i$th local cohomology module, hence the following graded isomorphism holds
\[
\omega^{i} (M) \isom \Hom_K(H^i(M),K),\quad\text{for all $i\in\ZZ$},
\]
where $H^i(M)$ is the $i$th local cohomology module of $M$. From the given relation, since $\depth M = \min\{i : H^i(M)\neq 0\}$ and $\dim M = \max\{i : H^i(M)=0\}$,
it follows that
\[
\omega^i(M) = 0,\quad \text{for all $i<\depth M$ or $i>\dim M$},
\]
which means that, as the local cohomology modules, the modules of deficiency vanish outside the range between $\depth M$ and $\dim M$.

The following results state important properties of the modules of the deficiency (see \cite{SZ}) that will be useful throughout the section.

\begin{Proposition}\label{1:DeficiencyModules-Prop}
	Let $M$ be a finitely generated graded $S$-module and let $d=\dim M$. Then the following holds:
	\begin{enumerate}
		\item $\dim \omega^i(M) \leq i$ and $\dim \omega(M)=d$;
		\item there is a natural homomorphism $M\to \omega(\omega(M))$, which becomes a module isomorphism if and only if $M$ satisfies the Serre condition $S_2$;
	\end{enumerate}
\end{Proposition}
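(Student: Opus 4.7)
The plan is to invoke local duality for the polynomial ring $S$ and to translate the two statements about deficiency modules into standard properties of $\Ext$ modules or, equivalently, of local cohomology. Throughout, the starting point is that $S$ is a graded Gorenstein ring of Krull dimension $2n$, so $S(-2n)$ is a (shift of the) canonical module and the functors $\omega^i(-)=\Ext^{2n-i}_S(-,S(-2n))$ are Matlis dual to $H^i_\mm(-)$.

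For (1), I would first prove the bound $\dim \omega^i(M)\leq i$ via the associated primes of $\Ext$. If $\mathfrak{p}\in \Ass_S(\omega^i(M))=\Ass_S(\Ext^{2n-i}_S(M,S))$, then after localizing, $\mathfrak{p} S_\mathfrak{p}$ is associated to $\Ext^{2n-i}_{S_\mathfrak{p}}(M_\mathfrak{p},S_\mathfrak{p})$; in particular this $\Ext$ is nonzero, so $\pd_{S_\mathfrak{p}} M_\mathfrak{p}\geq 2n-i$. Auslander--Buchsbaum applied to the regular local ring $S_\mathfrak{p}$ gives $\pd_{S_\mathfrak{p}} M_\mathfrak{p}= \height\mathfrak{p}-\depth M_\mathfrak{p}\leq \height\mathfrak{p}$, so $\height\mathfrak{p}\geq 2n-i$, i.e., $\dim S/\mathfrak{p}\leq i$. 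Taking the maximum over associated primes yields $\dim\omega^i(M)\leq i$. The reverse inequality $\dim\omega(M)\geq d$ is obtained by exhibiting a single prime in the support: pick a minimal prime $\mathfrak{p}$ of $M$ with $\dim S/\mathfrak{p}=d$; then $M_\mathfrak{p}$ has finite length, so $\depth M_\mathfrak{p}=0$ and the Auslander--Buchsbaum formula forces $\pd_{S_\mathfrak{p}} M_\mathfrak{p}=\height\mathfrak{p}=2n-d$, which gives $\Ext^{2n-d}_{S_\mathfrak{p}}(M_\mathfrak{p},S_\mathfrak{p})\neq 0$ and therefore $\mathfrak{p}\in\Supp \omega(M)$.

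For (2), the natural map $M\to \omega(\omega(M))$ is the functorial biduality morphism: apply $\Hom_S(-,\omega_S)$ twice to $M$ and compose with the evaluation transformation $\mathrm{id}\to \Hom(-,\omega_S)\circ \Hom(-,\omega_S)$ (well defined since $\omega_S=S(-2n)$ is a dualising module for $S$). The kernel and cokernel of this map are controlled by the higher deficiency modules: one short route is to localise, observe that $\omega(\omega(-))$ commutes with localisation at primes of codimension $\leq 1$ (since we are taking ``top'' $\Ext$), and then to use that the $S_2$-condition is exactly equivalent to saying that $M$ is reflexive with respect to $\omega_S$ in codimension $\leq 1$ \emph{and} satisfies the depth condition $\depth M_\mathfrak{p}\geq \min\{2,\height\mathfrak{p}\}$ everywhere. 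A clean finish is Aoyama's theorem, which states precisely that for a finitely generated module over a Cohen--Macaulay ring admitting a canonical module, $M$ satisfies $S_2$ if and only if $M\to \omega(\omega(M))$ is an isomorphism; I would cite this directly.

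The main obstacle I anticipate is the ``if and only if'' in (2). The forward direction, that $M\to \omega(\omega(M))$ is injective on depth $\geq 1$ and bijective on depth $\geq 2$, is fairly mechanical using local duality applied locally. The converse, that isomorphism forces the Serre condition $S_2$, is the delicate half: one needs to track depth at primes of arbitrary height, and the cleanest argument goes through the vanishing of $\omega^i(\omega(M))$ for $i<\dim M$ combined with a careful application of the spectral sequence $\Ext^p(\Ext^q(M,\omega_S),\omega_S)\Rightarrow$ low-degree terms giving $M$ in the limit. This is where I would rely on the literature rather than reprove everything from scratch.
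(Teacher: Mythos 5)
The paper itself gives no argument for this proposition: it is quoted as known background from \cite{SZ} (ultimately Schenzel \cite{S_book}), so the comparison is really with that source. Your proof of (1) is correct and is essentially the standard one: a minimal prime $\mathfrak p$ of $\Supp\Ext^{2n-i}_S(M,S(-2n))$ is associated, $\Ext$ localizes, and Auslander--Buchsbaum over the regular local ring $S_{\mathfrak p}$ forces $\height\mathfrak p\geq 2n-i$, hence $\dim S/\mathfrak p\leq i$; and for $i=d$ a minimal prime of $\Supp M$ with $\dim S/\mathfrak p=d$ has $M_{\mathfrak p}$ of finite length, so $\Ext^{2n-d}_{S_{\mathfrak p}}(M_{\mathfrak p},S_{\mathfrak p})\neq 0$ and $\mathfrak p\in\Supp\omega(M)$. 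Nothing to object to there.

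In (2) there is one concrete gap and one misattribution. The gap: since $\omega(M)=\Ext^{2n-d}_S(M,S(-2n))$ with $2n-d>0$ whenever $M$ is not of maximal dimension, the evaluation (biduality) transformation for $\Hom_S(-,\omega_S)$ does \emph{not} produce a map $M\to\omega(\omega(M))$; plain $\Hom$-biduality only covers the case $d=2n$. The natural homomorphism must be constructed either as an edge homomorphism of the duality spectral sequence $\Ext^p_S\bigl(\Ext^{q}_S(M,\omega_S),\omega_S\bigr)$ (which you invoke only later, for the converse), or by reduction: choose a regular sequence of length $c=2n-d$ inside $\Ann_S M$ (possible because $\height\Ann_S M=c$), pass to the Gorenstein quotient $R=S/(\underline{x})$ of dimension $d$, identify $\omega(M)\isom\Hom_R(M,\omega_R)$ by the standard change-of-rings isomorphism, and then use $\Hom_R$-biduality (checking independence of the chosen sequence). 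As for the citation, the module-level statement ``$M\to\omega(\omega(M))$ is an isomorphism if and only if $M$ is $S_2$'' is Schenzel's (it is exactly the result reproduced in \cite{SZ}); Aoyama's theorem concerns the canonical module of a ring and its behavior under localization and completion, and citing it ``directly'' would not close the argument. Deferring the delicate converse to the literature is perfectly in line with what the paper does, but then the correct reference is Schenzel, and the one piece you do have to supply yourself --- the construction of the natural map --- is the piece that is currently stated incorrectly.
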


\begin{Theorem}\label{1:SCM_DefModChar}
	Let $M$ be a finitely generated graded $S$-module, with $d=\dim M$. Then the followings are equivalent:
	\begin{enumerate}
		\item $M$ is a {\SCM} module;
		\item for all $0\leq i \leq d$ the $i$th module of deficiency $\omega^i(M)$ is either zero or an $i$-dimensional {\CM} module.
	\end{enumerate}
\end{Theorem}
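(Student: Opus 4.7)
The plan is to establish both directions via the long exact sequence of $\Ext$ together with the local-duality identification $\omega^i(M)\isom \Hom_K(H^i(M),K)$.

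For (1) $\Rightarrow$ (2), I proceed by induction on the length $r$ of the {\SCM} filtration $0=M_0\subset M_1\subset\cdots\subset M_r=M$, setting $d_j:=\dim M_j/M_{j-1}$, so $d_1<\cdots<d_r$. The base case $r=1$ reduces to the well-known fact that $M$ is {\CM} of dimension $d$ iff $H^i(M)=0$ for $i\neq d$, which by local duality is exactly $\omega^i(M)=0$ for $i\neq d$. For the inductive step, I apply $\Ext^{2n-\bullet}_S(-,S(-2n))$ to
\[
0\to M_{r-1}\to M_r\to M_r/M_{r-1}\to 0
\]
to obtain the long exact sequence
\[
\cdots\to\omega^{i+1}(M_{r-1})\to\omega^i(M_r/M_{r-1})\to\omega^i(M_r)\to\omega^i(M_{r-1})\to\omega^{i-1}(M_r/M_{r-1})\to\cdots.
\]
Because $M_r/M_{r-1}$ is {\CM} of dimension $d_r$, only $\omega^{d_r}(M_r/M_{r-1})$ survives and is a $d_r$-dimensional {\CM} module. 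By the inductive hypothesis, the modules $\omega^i(M_{r-1})$ are zero or $i$-dimensional {\CM}, and all are supported in dimensions $\leq d_{r-1}<d_r$ thanks to Proposition \ref{1:DeficiencyModules-Prop}(1). Consequently the long exact sequence decouples into $\omega^{d_r}(M_r)\isom\omega^{d_r}(M_r/M_{r-1})$ and $\omega^i(M_r)\isom\omega^i(M_{r-1})$ for $i\neq d_r$, which is exactly the required property.

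For the converse (2) $\Rightarrow$ (1), I construct the filtration explicitly from the dimension filtration of $M$: let $d_1<\cdots<d_s$ be those indices with $\omega^{d_j}(M)\neq 0$, and define $M_j$ to be the largest graded submodule of $M$ whose associated primes all have dimension at most $d_j$, equivalently the intersection of the primary components of $0\subset M$ whose radical has dimension $\leq d_j$. Running the same long exact sequence on $0\to M_{j-1}\to M_j\to M_j/M_{j-1}\to 0$ and matching with the hypothesis that each $\omega^i(M)$ is zero or $i$-dimensional {\CM}, one shows inductively that $\omega^i(M_j/M_{j-1})=0$ for $i\neq d_j$ while $\omega^{d_j}(M_j/M_{j-1})\isom\omega^{d_j}(M)$, so each quotient is {\CM} of dimension $d_j$, producing an {\SCM} filtration.

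The principal obstacle lies in the converse: certifying that the dimension filtration just defined really produces the predicted shape for each $\omega^i(M_j)$. This requires exploiting Proposition \ref{1:DeficiencyModules-Prop}(2) and the natural map $M\to\omega(\omega(M))$ to control the unmixed parts and to rule out pathological extensions across dimensional strata. This is precisely the content of Schenzel's theorem in \cite{S_book}, to which we would refer for the full technical verification.
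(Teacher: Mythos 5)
First, note that the paper does not prove this statement at all: Theorem \ref{1:SCM_DefModChar} is quoted as Schenzel's characterization (see \cite{S_book}, and \cite{SZ}) and used as a black box, so there is no internal proof to compare yours against. Judged on its own, your argument for (1) $\Rightarrow$ (2) is correct and standard: truncating the filtration, the long exact sequence of $\Ext$ applied to $0\to M_{r-1}\to M_r\to M_r/M_{r-1}\to 0$ decouples because $\omega^i(M_r/M_{r-1})$ is concentrated in $i=d_r$ and $\omega^i(M_{r-1})=0$ for $i>d_{r-1}<d_r$ (the latter by the vanishing of local cohomology above the dimension, not really by Proposition \ref{1:DeficiencyModules-Prop}(1), but that is a cosmetic point), giving $\omega^{d_r}(M_r)\isom\omega^{d_r}(M_r/M_{r-1})$ and $\omega^i(M_r)\isom\omega^i(M_{r-1})$ otherwise.

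The converse, however, is where the actual content of the theorem lies, and there your proposal has a genuine gap. The hypothesis in (2) constrains only the modules $\omega^i(M)$, not the pieces $M_j$ of the dimension filtration, and the long exact sequence attached to $0\to M_{j-1}\to M_j\to M_j/M_{j-1}\to 0$ relates three families of unknown modules at once; nothing in the stated argument forces $\omega^i(M_j/M_{j-1})=0$ for $i\neq d_j$. Already for the top step one must show that the unmixed quotient $M/M_{s-1}$ is {\CM}, i.e.\ $\omega^i(M/M_{s-1})=0$ for $i<d$, and the exact sequence only sandwiches these modules between $\omega^i(M)$ and $\omega^{i+1}(M_{s-1})$, neither of which is known to vanish; dimension bounds such as $\dim\omega^i(M)\leq i$ do not yield vanishing. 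Your closing remark concedes this and refers the ``full technical verification'' to Schenzel --- but since the statement being proved \emph{is} Schenzel's theorem, this makes the attempt circular as a standalone proof (though, to be fair, it is exactly how the paper itself treats the result, namely as a citation). To close the gap you would need the additional structural input from \cite{S_book}/\cite{S}, e.g.\ the analysis of the dimension filtration via the $S_2$-ification and the natural map $M\to\omega(\omega(M))$ of Proposition \ref{1:DeficiencyModules-Prop}(2), carried out per stratum, not merely invoked.
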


From now on, throughout the paper we will use the following Mayer--Vietoris sequence
\begin{equation}\label{eq:GeneralMayerVietoris}
0 \to S/J_{G} \to S/J_{G_v} \oplus S/(J_{G\setminus \{v\}} + (x_v, y_v)) \to S/(J_{G_{v}\setminus \{v\}} + (x_v, y_v)) \to 0,
\end{equation}
where $G\setminus \{v\}$ denotes the induced subgraph of $G$ on the vertex set $V(G)\setminus\{v\}$, $G_{v}$ is the graph on the vertex set $V(G)$ and edge set $E(G)\union \{\{u,v\} : u,v\in N_G (v)\}$, where $N_G (v)$ is the neighborhood of $v$ in $G$.

We begin our investigation by focusing on a {\em block star graphs} $G$, namely a block graph with only one cutpoint (see Figure \ref{fig:blockstar}). Assume $G$ has $t$ cliques attached to the cutpoint $v$. Such a graph has $\cC(G)=\{\emptyset,\{v\}\}$, thus
\[
J_{G_v} = P_{\emptyset} = J_{K_n},\qquad J_{G\setminus \{v\}} + (x_v,y_v)=P_{\{v\}}.
\]

\noindent
Moreover, $J_{G_v \setminus \{v\}} + (x_v,y_v) = J_{G_v} + (J_{G\setminus \{v\}} + (x_v,y_v)) = J_{K_{n-1}} + (x_v, y_v)$.

In the following we  also assume that $t\geq 3$. In fact, if $t=2$, then $J_G$ is {\CM} since $G$ is a decomposable graph (see \cite{ERT2022b}). With this assumption, by \eqref{Eq:dimension} we have
\[
\dim S/J_G = n+t-1.
\]

\begin{figure}[H]
	\centering
	\begin{tikzpicture}[scale=.9,vertices/.style={draw, fill=black, circle, inner sep=1pt}]
		\node[vertices] (v) at (0,0) {}; 
		\node[vertices] (1) at (-1,1) {}; 
		\node[vertices] (2) at (0,1) {}; 
		\node[vertices] (3) at (1,1) {};		
		\node[vertices] (4) at (1,0) {}; 
		\node[vertices] (5) at (1,-1) {};
		\node[vertices] (6) at (0,-1) {};		
		\node[vertices] (7) at (-1,-1) {}; 
		\node[vertices] (8) at (-1,0) {};
		
		\foreach \k in {1,...,8}{
		    \draw (v)--(\k);
		};
		
		\draw (2)--(3);
		
		\draw (4)--(5);
		\draw (4)--(6);
		\draw (5)--(6);
		
		\draw (7)--(8);
		
		\node at (.5,0.2) {$v$};
	\end{tikzpicture}
	\hspace*{2cm}
	\begin{tikzpicture}[scale=.9,vertices/.style={draw, fill=black, circle, inner sep=1pt}]
			\node[vertices] (v) at (0,1.54) {};
			
			\node[vertices] (1) at (-.5,0) {};
			\node[vertices] (2) at (.5,0) {};
			\node[vertices] (3) at (.81,.95) {};
			\node[vertices] (4) at (-.81,.95) {};
			
			\node[vertices] (5) at (0,2.54) {};
			\node[vertices] (6) at (-1,2.54) {};
			\node[vertices] (7) at (-1,1.54) {};
			
			\node[vertices] (8) at (1.31,1.19) {};
			\node[vertices] (9) at (.96,2.49) {};
		
			\node at (0.17,1.9) {$v$};
			\foreach \k in {1,...,9}{
			    \draw (v)--(\k);
			};
			\draw (1)--(2) (1)--(3) (1)--(4) (2)--(3) (2)--(4) (3)--(4);
			\draw (5)--(6) (5)--(7) (6)--(7);
			\draw (8)--(9); 
	\end{tikzpicture}
	\caption{Examples of block star graphs}
	\label{fig:blockstar}
\end{figure}
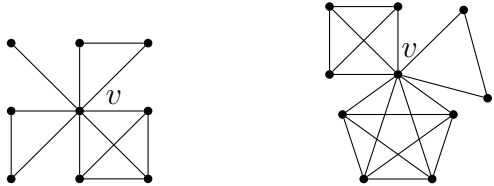

\begin{Theorem} \label{1:BlockStar_DefModDescr+SCM}
	Let $G$ be block star graph on $n$ vertices with $t\geq 3$ cliques attached to a single vertex $v$ and let $J_G$ be the binomial edge ideal of $G$. Then
	\begin{enumerate}
		\item $\omega^{n+1}(S/J_G)$ is a {\CM} module of dimension $n+1$, moreover, we have $\omega^{n+1}(\omega^{n+1}(S/J_G))\isom (J_{K_{n-1}} + (x_v,y_v))/J_{K_n}$;
		\item $\omega(S/J_G)\isom\omega(S/(J_{G\setminus\{v\}}+(x_v,y_v)))$;
		\item $\omega^{i}(S/J_G)=0$ for all $i\neq n+1,n+t-1$.
	\end{enumerate}
	In particular, $J_G$ is {\SCM}.
\end{Theorem}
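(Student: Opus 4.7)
The approach is to construct a {\SCM} filtration of length two for $S/J_G$, exploiting the fact that the primary decomposition of $J_G$ involves only the two primes $J_{K_n}$ and $P_{\{v\}}$. Since $\dim S/J_{K_n} = n+1 < n+t-1 = \dim S/P_{\{v\}}$ (using $t \geq 3$), the candidate filtration sits in the short exact sequence
\[ 0 \to P_{\{v\}}/J_G \to S/J_G \to S/P_{\{v\}} \to 0. \]
The plan is to show that $P_{\{v\}}/J_G$ is {\CM} of dimension $n+1$ and that $S/P_{\{v\}}$ is {\CM} of dimension $n+t-1$, then apply the canonical-module functor to this sequence and invoke Theorem \ref{1:SCM_DefModChar} together with Proposition \ref{1:DeficiencyModules-Prop} to read off the three deficiency-module statements.

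For the bottom piece, I would first identify it concretely via the second isomorphism theorem: $P_{\{v\}}/J_G \cong (J_{K_n}+P_{\{v\}})/J_{K_n}$, and since $P_{\{v\}} = J_{G\setminus v} + (x_v, y_v)$ with $J_{G\setminus v}\subset J_{K_n}$, one has $J_{K_n} + P_{\{v\}} = J_{K_n}+(x_v,y_v) = J_{K_{n-1}}+(x_v,y_v)$ (the last equality because modding out by $x_v, y_v$ turns $J_{K_n}$ into $J_{K_{n-1}}$). This produces the short exact sequence
\[ 0 \to (J_{K_{n-1}}+(x_v,y_v))/J_{K_n} \to S/J_{K_n} \to S/(J_{K_{n-1}}+(x_v,y_v)) \to 0, \]
whose outer terms are both {\CM}: $S/J_{K_n}$ is the determinantal ring of dimension and depth $n+1$, while $S/(J_{K_{n-1}}+(x_v,y_v))$ is a regular-sequence quotient of the {\CM} ring $S/J_{K_{n-1}}$, hence has dimension and depth $n$. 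The depth lemma then forces $\depth(P_{\{v\}}/J_G)\geq \min(n+1, n+1)=n+1$; and since $P_{\{v\}}/J_G$ is a nonzero submodule of the $(n+1)$-dimensional domain $S/J_{K_n}$, it has dimension exactly $n+1$, proving it is {\CM}. The top piece $S/P_{\{v\}}$ is {\CM} of dimension $n+t-1$ because $S/J_{G\setminus v}$ is a tensor product over $K$ of {\CM} determinantal rings attached to the clique components of $G\setminus v$ (on disjoint variable sets), and $x_v,y_v$ is a regular sequence on it.

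With these {\CM} pieces in hand, applying the canonical-module functor to the short exact sequence above produces a long exact sequence of deficiency modules in which almost every entry vanishes --- the two ends are {\CM} of distinct dimensions $n+1$ and $n+t-1$ --- collapsing it to $\omega^{n+1}(S/J_G) \cong \omega(P_{\{v\}}/J_G)$, $\omega^{n+t-1}(S/J_G) \cong \omega(S/P_{\{v\}})$, and $\omega^i(S/J_G) = 0$ for all other $i$. This gives (3), and gives (2) upon noting $P_{\{v\}} = J_{G\setminus v}+(x_v,y_v)$. For (1), $\omega^{n+1}(S/J_G) \cong \omega(P_{\{v\}}/J_G)$ is the canonical module of a {\CM} module of dimension $n+1$, hence itself {\CM} of dimension $n+1$; applying the functor once more and invoking Proposition \ref{1:DeficiencyModules-Prop}(2) --- {\CM} implies Serre's $S_2$ --- yields $\omega^{n+1}(\omega^{n+1}(S/J_G)) \cong \omega(\omega(P_{\{v\}}/J_G)) \cong P_{\{v\}}/J_G \cong (J_{K_{n-1}}+(x_v,y_v))/J_{K_n}$. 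The {\SCM} conclusion is then immediate from Theorem \ref{1:SCM_DefModChar}. The main technical obstacle is the depth-lemma step for $P_{\{v\}}/J_G$, which hinges on its precise identification as a submodule of the domain $S/J_{K_n}$; everything afterwards is a mechanical application of the duality theory recalled in Section \ref{sec:1}.
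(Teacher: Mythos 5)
Your proof is correct, but it takes a genuinely different route from the paper. The paper applies local cohomology to the Mayer--Vietoris sequence $0 \to S/J_G \to S/J_{K_n} \dirsum S/P_{\{v\}} \to S/(J_{K_{n-1}}+(x_v,y_v)) \to 0$, feeds in $\depth S/J_G = n+1$ from \cite{EHH} to isolate the nonvanishing cohomology, and then dualizes twice, using the naturality of $M \to \omega(\omega(M))$ and the surjectivity of the lifted map $S/J_{K_n} \to S/(J_{K_{n-1}}+(x_v,y_v))$ to force $\omega^{n}(\omega^{n+1}(S/J_G))=0$ and hence the Cohen--Macaulayness of $\omega^{n+1}(S/J_G)$. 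You instead work directly with the dimension filtration $0 \to P_{\{v\}}/J_G \to S/J_G \to S/P_{\{v\}} \to 0$ coming from $J_G = J_{K_n} \sect P_{\{v\}}$, identify the bottom piece as $(J_{K_{n-1}}+(x_v,y_v))/J_{K_n}$ by the second isomorphism theorem, and prove it is {\CM} of dimension $n+1$ by a single Depth Lemma application to $0 \to (J_{K_{n-1}}+(x_v,y_v))/J_{K_n} \to S/J_{K_n} \to S/(J_{K_{n-1}}+(x_v,y_v)) \to 0$ (your dimension count is fine, and in fact $\depth \leq \dim \leq n+1$ already forces equality without invoking that $S/J_{K_n}$ is a domain). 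Since the two quotients of your filtration are {\CM} of dimensions $n+1 < n+t-1$, sequential Cohen--Macaulayness follows straight from Definition \ref{def:SCM_definition}, with no need for the depth of $S/J_G$, for local cohomology, or for the double-dualization diagram; one application of the (contravariant) deficiency functor to your two-term sequence, whose outer terms are {\CM} of distinct dimensions, collapses the long exact sequence and yields (2), (3), and $\omega^{n+1}(S/J_G)\isom\omega(P_{\{v\}}/J_G)$, after which Proposition \ref{1:DeficiencyModules-Prop}(2) gives the double-dual statement in (1) exactly as you say. What the paper's heavier route buys is an illustration of the deficiency-module/local-cohomology machinery (the template of \cite[Theorem 4.1]{SZ}) that motivates the rest of Section \ref{sec:1}; what yours buys is economy, and it foreshadows the Goodarzi-filtration viewpoint used in Sections \ref{sec:2} and \ref{sec:3}. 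The only points to make explicit are the standard facts you use silently: that the canonical module of a {\CM} module over the Gorenstein ring $S$ is again {\CM} of the same dimension, and that a tensor product over $K$ of {\CM} affine $K$-algebras is {\CM} (both also used implicitly by the paper).
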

\begin{proof}
	We will show the statement by applying Theorem \ref{1:SCM_DefModChar}. In the following we will proceed similarly to the proof of \cite[Theorem 4.1]{SZ}.

	Consider the Mayer--Vietoris sequence
	\begin{equation}\label{1:BEI-BlockStar-Seq}
		0 \to S/J_G \to S/J_{K_n} \dirsum S/(J_{G\setminus\{v\}}+(x_v,y_v)) \to S/(J_{K_{n-1}}+(x_v,y_v)) \to 0.
	\end{equation}	
	
	We now apply the local cohomology functors to the sequence \eqref{1:BEI-BlockStar-Seq}. Since $S/J_{K_n}$ is a {\CM} module of dimension $n+1$, we have that $H^{i}(S/J_{K_n})=0$ for all $i\neq {n+1}$ and, since $S/(J_{G\setminus\{v\}}+(x_v,y_v))$, $S/(J_{K_{n-1}} + (x_v,y_v))$ are {\CM} modules of dimension $n+t-1$ and $n$, respectively, we have that $H^i(S/(J_{G\setminus\{v\}}+(x_v,y_v)))=0$ for all $i\neq {n+t-1}$ and $H^i(S/(J_{K_{n-1}} + (x_v,y_v)))=0$ for all $i\neq n$.
	Moreover, since $\depth S/J_G = n+1$ (see \cite{EHH}) and $\dim S/J_G = n+t-1$, we have that $H^i(S/J)=0$ for all $i<n+1$ or $i>n+t-1$. Merging all this informations gives us the short exact sequence
	\begin{equation} \label{1:BEI-BlockStar-Seq-Hn}
		0 \to H^{n}(S/(J_{K_{n-1}}+(x_v,y_v))) \to H^{n+1}(S/J_{G}) \to H^{n+1}(S/J_{K_n}) \to 0
	\end{equation}
	and the isomorphism
	\begin{equation}\label{1:BEI-BlockStar-Seq-Canonical}
		H^{n+t-1}(S/J_G) \isom H^{n+t-1}(S/J_{G\setminus\{v\}}+(x_v,y_v)).
	\end{equation}
	Moreover, $H^i (S/J_G)=0$, for all $i\notin \{n+1,n+t-1\}$. This implies condition (3) of the Theorem; in fact, by Local Duality, we have
	\[
	\omega^i (S/J_G)=0,\qquad \text{for all $i\notin \{n+1,n+t-1\}$},
	\]
	as stated. By applying Local Duality to sequence \eqref{1:BEI-BlockStar-Seq-Hn}, we obtain the short exact sequence on the modules of deficiency,
	\begin{equation*}
		0 \to \omega^{n+1}(S/J_{K_n}) \to \omega^{n+1}(S/J_{G}) \to \omega^{n}(S/(J_{K_{n-1}}+(x_v,y_v)) \to 0,
	\end{equation*}
	which, by applying again the local cohomology functor and dualizing, leads to the short exact sequence
	\begin{align*}
		0 \to \omega^{n+1}(\omega^{n+1}(S/J_G)) &\to \omega^{n+1}(\omega^{n+1}(S/J_{K_n})) \to \\
		&\to \omega^{n}(\omega^{n}(J_{K_{n-1}}+(x_v,y_v))) \to \omega^{n}(\omega^{n+1}(S/J_G)) \to 0.
	\end{align*}
	Note that we took into account that $\omega^{n+1}(S/J_{K_n})$ is a {\CM} module of dimension $n+1$ and $\omega^n(S/(J_{K_{n-1}}+(x_v,y_v))$ is a {\CM} module of dimension $n$. Moreover, by the Depth Lemma, we obtain $\depth \omega^{n+1}(S/J_G)\geq n$. Consider now the map $\omega^{n+1}(\omega^{n+1}(S/J_{K_n})) \to \omega^{n}(\omega^{n}(S/(J_{K_{n-1}} +(x_v,y_v) )))$, then there exist an epimorphism $f$ which lifts the map and makes the following diagram
	\begin{equation*}
	\begin{gathered}
		\xymatrixcolsep{1.25cm}
		\xymatrix{
		  S/J_{K_n} \ar[r]^-{f} \ar[d] & S/(J_{K_{n-1}}+(x_v,y_v)) \ar[d]\\
		  \omega(\omega(S/J_{K_n})) \ar[r] & \omega(\omega(S/(J_{K_{n-1}}+(x_v,y_v)))}
	\end{gathered}
	\end{equation*}
	commutative. Observe that we wrote $\omega(\omega(S/J_{K_n}))$ instead of $\omega^{n+1}(\omega^{n+1}(S/J_{K_n}))$ and $\omega(\omega(S/(J_{K_{n-1}}+(x_v,y_v))))$ instead of $\omega^{n}(\omega^{n}(S/(J_{K_{n-1}}+(x_v,y_v))))$ since we are dealing with canonical modules. Moreover, by Proposition \ref{1:DeficiencyModules-Prop}, the vertical maps are isomorphisms. Then we obtain the short exact sequence,
	\begin{equation}\label{Eq:BlockStar_FinalSeq}
		\begin{aligned}
		0 \to \omega^{n+1}(\omega^{n+1}(S/J_G)) \to S/J_{K_n} & \xrightarrow{f} S/(J_{K_{n-1}}+(x_v,y_v)) \\
			&\to \omega^{n}(\omega^{n+1}(S/J_G)) \to 0.
		\end{aligned}
	\end{equation}
	Since $f$ is a surjective map, then $\omega^{n}(\omega^{n+1}(S/J_G))=0$. Hence, $\depth \omega^{n+1}(S/J_G) = \dim \omega^{n+1}(S/J_G) = n+1$. It follows that $\omega^{n+1}(S/J_G)$ is a {\CM} module of dimension $n+1$. Moreover, the sequence \eqref{Eq:BlockStar_FinalSeq} induces the isomorphism\linebreak
	$\omega^{n+1}(\omega^{n+1}(S/J_G))\isom(J_{K_{n-1}}+(x_v,y_v))/J_{K_n}$ and this completes the proof of point 1. of the statement.
	
	Finally, the isomorphism \eqref{1:BEI-BlockStar-Seq-Canonical} leads to $\omega(S/J_G) \isom \omega(S/J_{G\setminus\{v\}}+(x_v,y_v))$, hence $\omega(S/J_G)$ is a {\CM} module of dimension $n+t-1$ and point 2. of the statement is also proved.
	
	Lastly, by conditions 1. and 2. and Theorem \ref{1:SCM_DefModChar}, it easily follows that $S/J_G$ is {\SCM}.
\end{proof}

In the previous result, we have a complete description of the modules of deficiency of $S/J_G$, when $G$ is a block star graph. Nevertheless, it can be difficult to apply the same techniques of Theorem \ref{1:BlockStar_DefModDescr+SCM} in general cases. In fact, by sequence \eqref{eq:GeneralMayerVietoris}, it is not always possible to get informations on the non-vanishing local cohomology modules, hence for the modules of deficiency (see for instance \cite{W}).

\begin{Example}
	Let $G$ be the tree on 6 vertices as shown in the picture below.
	\begin{figure}[H]
		\centering
		\begin{tikzpicture}[scale=.9,vertices/.style={draw,black,fill=black,circle,inner sep=1.2pt}]
			\node[vertices] (1) at (-1,0) {};
			\node at (-1,10pt) {$1$};
			\node[vertices] (2) at (1,0) {};
			\node at (1,10pt) {$2$};
			
			\node[vertices] (3) at (-2,1) {};
			\node at (-2,1cm+10pt) {$3$};
			\node[vertices] (4) at (-2,-1) {};
			\node at (-2,-1cm-10pt) {$4$};
			\node[vertices] (5) at (2,1) {};
			\node at (2,1cm+10pt) {$5$};
			\node[vertices] (6) at (2,-1) {};
			\node at (2,-1cm-10pt) {$6$};
			
			\node at (3,1.5) {$G$};

			\draw (1)--(2) (1)--(3) (1)--(4) (2)--(5) (2)--(6);
		\end{tikzpicture}
		\label{fig:not_vanish}
	\end{figure}

Such a graph has the following cutsets
\[
\emptyset, \{1\}, \{2\}, \{1,2\},
\]
hence sequence \eqref{eq:GeneralMayerVietoris}, by choosing $v=1$, becomes
\[
0\to S/J_G \to S/J_{G_1} \dirsum S/(J_{G \setminus \{1\}}+(x_1,y_1)) \to S/(J_{G_1 \setminus \{1\}}+(x_1,y_1)) \to 0.
\]
One has $\depth S/J_G =7$ and $\dim S/J_G=8$. Moreover, $\dim S/J_{G_1} =\linebreak \dim S/(J_{G\setminus\{1\}}+(x_1,y_1))=8$ and $\dim S/(J_{G_1\setminus\{1\}}+(x_1,y_1))=7$
By applying local cohomology functors, we obtain the sequence
\begin{equation}\label{eq:Example_NotVanishing}
\begin{split}
	0\to H^{6}(S/(J_{G_1 \setminus \{1\}}+(x_1,y_1))) \to H^7(S/J_G) \to H^7(S/J_{G_1}) \to \\
	\to H^7(S/(J_{G_1 \setminus \{1\}}+(x_1,y_1))) \to H^8(S/J_G) \to \\
	\to H^8(S/J_{G_1}) \dirsum H^8(S/(J_{G \setminus \{1\}}+(x_1,y_1)))\to 0,
\end{split}
\end{equation}
where we also kept in mind that $\depth S/J_{G_1}=7$, $\depth S/(J_{G\setminus\{1\}}+(x_1,y_1))=8$ and $\depth S/(J_{G_1\setminus\{1\}}+(x_1,y_1))=6$.

The local cohomology modules appearing in sequence \eqref{eq:Example_NotVanishing} are all non--zero, hence it is difficult to retrieve informations about $H^7(S/J_G)$ and $H^8(S/J_G)$ as for the proof of Theorem \ref{1:BlockStar_DefModDescr+SCM}.
\end{Example}

For this reason, we will use different strategies in Section \ref{sec:3}, in order to prove that all block graphs have {\SCM} binomial edge ideal.

More recently, Goodarzi \cite{G} provided an interesting method to construct the dimension filtration of a homogeneous ideal.

\begin{Definition}
	Let $I\subset S$ be a homogeneous ideal, with $d=\dim S/I$, and let $I=\Sect_{j=1}^r Q_j$ be the minimal primary decomposition of $I$. For all $1\leq j\leq r$, let $P_j = \sqrt{Q_j}$ be the radical of $Q_j$. For all $-1\leq i\leq d$, denote by $I^{<i>}$ the ideal
	\[
	I^{<i>} = \Sect_{\dim S/{P_j}>i} Q_{j},
	\]
	where $I^{<-1>}=I$ and $I^{<d>}=S$. Thus, we have a filtration
	\[
	I = I^{<-1>} \subseteq I^{<0>} \subseteq I^{<1>} \subseteq \ldots \subseteq I^{<d-1>} \subseteq I^{<d>} = S.
	\]
	Moreover, if we rewrite the above sequence of inclusions modulo $I$, we have
	\begin{equation}
		0 \subseteq I^{<0>}/I \subseteq I^{<1>}/I \subseteq \ldots \subseteq I^{<d-1>}/I \subseteq S/I,
	\end{equation}
	which is the dimension filtration of the $S$-module $S/I$.
\end{Definition}

By the previous definition it is possible to characterize the {\SCM} property (see \cite[Proposition 16]{G}).

\begin{Proposition}\label{prop:SCM-char-goodarzi}
	Let $I\subset S$ be a homogeneous ideal and suppose $d=\dim S/I$. Then $S/I$ is {\SCM} if and only if
	\[
	\depth S/{I^{<i>}} \geq i+1,\qquad\text{for all $0\leq i < d$}.
	\]
\end{Proposition}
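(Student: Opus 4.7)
The plan is to work directly with the dimension filtration
\[
0 \subseteq I^{<0>}/I \subseteq I^{<1>}/I \subseteq \cdots \subseteq I^{<d-1>}/I \subseteq S/I,
\]
observing that each successive quotient $I^{<i>}/I^{<i-1>}$ has dimension at most $i$, and exactly $i$ when nonzero, since its annihilator contains the intersection of the primary components of $I$ of dimension greater than $i$. A standard fact, provable by showing that the {\SCM} filtration (whose uniqueness is recalled after Definition \ref{def:SCM_definition}) coincides with the dimension filtration after omitting zero steps, is that $S/I$ is {\SCM} if and only if every nonzero quotient $I^{<i>}/I^{<i-1>}$ is {\CM} of dimension $i$. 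The task then reduces to linking this condition to the depth bounds in the statement, for which the main tool is the short exact sequence
\[
0 \to I^{<i>}/I^{<i-1>} \to S/I^{<i-1>} \to S/I^{<i>} \to 0.
\]

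For the \emph{reverse} direction I would induct on $i$ and apply the Depth Lemma to the sequence above. The case $i=0$ is automatic since $\dim I^{<0>}/I \leq 0$. For $1 \leq i \leq d-1$, the hypothesis gives $\depth S/I^{<i-1>} \geq i$ and $\depth S/I^{<i>} \geq i+1$, so
\[
\depth\bigl(I^{<i>}/I^{<i-1>}\bigr) \geq \min\bigl(\depth S/I^{<i-1>},\ \depth S/I^{<i>}+1\bigr) \geq i,
\]
which, combined with the dimension bound $\dim I^{<i>}/I^{<i-1>} \leq i$, makes $I^{<i>}/I^{<i-1>}$ either zero or {\CM} of dimension $i$. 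The top case $i=d$ is immediate since $I^{<d>}/I^{<d-1>} = S/I^{<d-1>}$ has dimension $d$ and depth at least $d$ by hypothesis. Hence the dimension filtration, after removing zero steps, is a {\SCM} filtration of $S/I$.

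For the \emph{forward} direction I would show that if $S/I$ is {\SCM} then so is each quotient $S/I^{<i>}$, with induced filtration $0 \subset I^{<i+1>}/I^{<i>} \subset \cdots \subset S/I^{<i>}$ whose successive quotients are the pieces $I^{<j>}/I^{<j-1>}$ for $j>i$, all {\CM} of dimension $j \geq i+1$. Iterating the long exact sequence of local cohomology attached to $0 \to I^{<j-1>}/I^{<i>} \to I^{<j>}/I^{<i>} \to I^{<j>}/I^{<j-1>} \to 0$ and using the vanishing of $H^\ell$ on each {\CM} piece outside its dimension, one concludes by induction on $j$ that $H^\ell(S/I^{<i>}) = 0$ for all $\ell \leq i$, whence $\depth S/I^{<i>} \geq i+1$.

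The main obstacle is the identification of the {\SCM} filtration with the dimension filtration, which secures the key equivalence reducing the {\SCM} property to the piecewise {\CM} condition on the $I^{<i>}/I^{<i-1>}$. Once that is in place, both implications reduce to Depth Lemma and local cohomology computations entirely analogous to those already appearing in the proof of Theorem \ref{1:BlockStar_DefModDescr+SCM}.
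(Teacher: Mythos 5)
The paper itself does not prove Proposition \ref{prop:SCM-char-goodarzi}: it is quoted from Goodarzi \cite[Proposition 16]{G}, so there is no internal proof to compare against, and your argument is essentially the standard Schenzel--Goodarzi one. Your reverse implication is complete as written: the Depth Lemma applied to $0 \to I^{<i>}/I^{<i-1>} \to S/I^{<i-1>} \to S/I^{<i>} \to 0$ gives $\depth I^{<i>}/I^{<i-1>} \geq i$, which together with $\dim I^{<i>}/I^{<i-1>} \leq i$ makes every nonzero quotient {\CM} of dimension $i$, so the dimension filtration with zero steps omitted is an {\SCM} filtration in the sense of Definition \ref{def:SCM_definition} (note you do not need ``exactly $i$ when nonzero'' a priori; it falls out of the depth estimate). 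The forward implication, by contrast, rests entirely on the fact you flag as the main obstacle: that if $S/I$ is {\SCM} then the quotients $I^{<j>}/I^{<j-1>}$ of the dimension filtration are themselves {\CM}. This is precisely Schenzel's theorem on the dimension filtration \cite{S} (the result behind the uniqueness statement recalled after Definition \ref{def:SCM_definition}), and citing it is legitimate since the paper leans on the same sources; but be aware that uniqueness of an {\SCM} filtration alone does not identify it with $\{I^{<i>}/I\}$ --- one needs the associated-primes argument showing that $I^{<i>}/I$ is the largest submodule of $S/I$ of dimension at most $i$, and that is where the real content of the proposition sits. One small correction: the annihilator of $I^{<i>}/I^{<i-1>}$ contains $\Sect_{\dim S/P_j \leq i} Q_j$, the intersection of the primary components of dimension \emph{at most} $i$, not of those of dimension greater than $i$ (the latter intersection is $I^{<i>}$ itself and yields no dimension bound); with that fix, your dimension estimate $\dim I^{<i>}/I^{<i-1>} \leq i$ is correct, and the local cohomology bookkeeping in your forward direction goes through as you describe.
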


The above characterization can be reformulated, using the above notation, in term of
\[
\dD(I) = \left\{\dim S/P_j \ :\ P_j=\sqrt{Q}_j\right\} = \{d_1,\ldots,d_{\ell}\}.
\]

\begin{Lemma}\label{lem:DG}
	Let $I\subset S$ be a homogeneous ideal and $\dD(I)=\{d_1,\ldots,d_{\ell}\}$, with $d_1 < \ldots < d_{\ell}=d$. Then $S/I$ is {\SCM} if and only if
	\[
	\depth S/{I^{<d_j-1>}} = d_j,\qquad\text{for all $j=1,\ldots,\ell$}.
	\]
\end{Lemma}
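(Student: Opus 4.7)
The plan is to deduce the lemma from Proposition \ref{prop:SCM-char-goodarzi} by analysing the indices at which the filtration $\{I^{<i>}\}_{i=-1}^{d}$ actually changes value. First, I would record the key combinatorial observation that $I^{<i>}=\Sect_{\dim S/P_k > i} Q_k$ depends on $i$ only through the set of associated primes $P_k$ with $\dim S/P_k > i$, and that no such prime has dimension strictly between two consecutive elements of $\dD(I)$. Setting $d_0:=-1$, this yields $I^{<i>}=I^{<d_j-1>}=\Sect_{\dim S/P_k\ge d_j} Q_k$ for every $i$ with $d_{j-1}\le i\le d_j-1$, so the filtration has exactly $\ell$ distinct non-trivial terms $I=I^{<d_1-1>}\subsetneq I^{<d_2-1>}\subsetneq\cdots\subsetneq I^{<d_\ell-1>}$.

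For the forward direction, I would assume $S/I$ is {\SCM} and apply Proposition \ref{prop:SCM-char-goodarzi} at $i=d_j-1$ to obtain $\depth S/I^{<d_j-1>}\ge d_j$. For the matching upper bound, I would check that the sub-intersection $\Sect_{\dim S/P_k\ge d_j} Q_k$ is still an irredundant primary decomposition: any $Q_k$ superfluous there would also be superfluous in the decomposition of $I$, contradicting minimality. Consequently $\Ass(S/I^{<d_j-1>})=\{P_k:\dim S/P_k\ge d_j\}$, and in particular this set contains a prime of dimension exactly $d_j$. Combined with the standard inequality $\depth M\le\dim S/P$ for any $P\in\Ass M$, this gives $\depth S/I^{<d_j-1>}\le d_j$, and hence equality. (When $d_1=0$ the lower bound is automatic, since $\mm\in\Ass(S/I)$.)

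For the converse, I would assume the depth equalities in the statement and verify Proposition \ref{prop:SCM-char-goodarzi}'s condition directly. Given any $0\le i<d$, there is a unique $j$ with $d_{j-1}\le i\le d_j-1$; by the opening observation $I^{<i>}=I^{<d_j-1>}$, whence $\depth S/I^{<i>}=d_j\ge i+1$, which is the required bound.

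The main obstacle I anticipate is the irredundancy claim for the sub-intersections $\Sect_{\dim S/P_k\ge d_j} Q_k$; this is what pins down $\Ass(S/I^{<d_j-1>})$ and delivers the sharp upper bound on depth. Once that step is in place, the rest of the argument is essentially a reindexing of Proposition \ref{prop:SCM-char-goodarzi}.
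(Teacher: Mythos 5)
Your proposal is correct and follows essentially the same route as the paper: both directions are deduced from Proposition \ref{prop:SCM-char-goodarzi}, using that the filtration $I^{<i>}$ only changes at the values in $\dD(I)$, together with the bound $\depth S/I^{<d_j-1>}\leq d_j$ coming from an associated prime of dimension exactly $d_j$ (the paper cites \cite[Proposition 1.2.13]{BH_book} for this, while you spell out the irredundancy of the sub-intersection that justifies the computation of $\Ass$). Your treatment of the edge cases ($i<d_1-1$ via $d_0=-1$, and $d_1=0$) is slightly more explicit than the paper's, but the argument is the same.
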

\begin{proof}
	Suppose $S/I$ is {\SCM}, then by Proposition \ref{prop:SCM-char-goodarzi}, we have that
	\[
	\depth S/{I^{<i>}} \geq i+1,\qquad\text{for all $0\leq i < d$},
	\]
	in particular
	\[
	\depth S/{I^{<d_j-1>}} \geq d_j,\qquad\text{for all $j=1,\ldots,\ell$}.
	\]
	Moreover, for all $j=1,\ldots,\ell$, we have $\depth S/{I^{<d_j-1>}}\leq \dim S/{I^{<d_j-1>}}$ and, by the definition of $I^{<d_j-1>}$, by \cite[Proposition 1.2.13]{BH_book} that $\depth S/{I^{<d_j-1>}}\leq d_j$. Therefore, for all $j=1,\ldots,\ell$, we have
	\[
	\depth S/{I^{<d_j-1>}} = d_j.
	\]
	
	Conversely, suppose
	\[
	\depth S/{I^{<d_j-1>}} = d_j,\qquad\text{for all $j=1,\ldots,\ell$}
	\]
	and show that $S/I$ is {\SCM}. Let $0\leq i < d$. We must distinguish two cases:
	
	{\sc I Case:} suppose $i=d_{j}-1$ for some $j$, then $\depth S/I^{<i>}=d_j=i+1$;
	
	{\sc II Case:} suppose $d_j \leq i < d_{j+1}$ for some $j$, then $\depth S/I^{<i>}=d_{j+1}\geq i+1$.\\
	Therefore, for all $0\leq i < d$,
	\[
	\depth S/I^{<i>} \geq i+1,
	\]
	and $S/I$ is {\SCM}.
\end{proof}


We end this section by providing some sufficient conditions and some necessary conditions for a graph $S/J_G$ to be {\SCM}. For this aim the next notion will be crucial.

A graph $G$ on $n$ vertices is {\em $\ell$-vertex-connected} (or simply $\ell$-connected) if $\ell < n$ and for every subset $T\subset V(G)$ of vertices such that $|T| < \ell$, the graph $G\setminus T$ is connected. The {\em vertex-connectivity} (or simply {\em connectivity}) of $G$, denoted by $\kappa(G)$, is the maximum integer $\ell$ such that $G$ is $\ell$-vertex-connected.

In \cite{BNB}, the authors proved the following result.

\begin{Theorem}{\em (\cite[Theorem 3.19]{BNB})}\label{thm:Banerjee-Depth-Connectivity}
	Let $G$ be a non--complete, connected graph on $n$ vertices and $J_G$ the corresponding binomial edge ideal. Then
	\[
	\depth S/{J_G} \leq n - \kappa(G) + 2,
	\]
	where $\kappa(G)$ is the connectivity of the graph $G$.
\end{Theorem}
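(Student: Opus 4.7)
The plan is to exploit the general inequality $\depth M \le \dim S/P$ for every $P \in \Ass M$, together with the fact that $J_G$ is radical with primary decomposition $J_G = \bigcap_{T \in \cC(G)} P_T$ and $\dim S/P_T = n + c(T) - |T|$. Since then $\Ass(S/J_G) = \{P_T : T \in \cC(G)\}$, the theorem follows as soon as we can exhibit a cutset $T$ satisfying $n + c(T) - |T| \le n - \kappa + 2$.

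The first attempt is to take a minimum vertex separator $T_0$: by definition $|T_0| = \kappa$ and $G \setminus T_0$ splits into $c \ge 2$ connected components $G_1, \ldots, G_c$, and minimality forces every $v \in T_0$ to have a neighbour in each $G_i$, so that $T_0 \in \cC(G)$. If $c = 2$, then $\dim S/P_{T_0} = n - \kappa + 2$ and the bound follows immediately.

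When every minimum cut has $c \ge 3$ (and in fact even $\min_{T \in \cC(G)}(n + c(T) - |T|)$ can strictly exceed $n - \kappa + 2$, as already happens for $K_{3,3}$), a finer cohomological argument is needed. The strategy is to pick $v \in T_0$, apply the Mayer--Vietoris sequence \eqref{eq:GeneralMayerVietoris}, and take the induced long exact sequence of local cohomology. Induction on $n$, combined with the analogous bounds on $\depth S/J_{G_v}$ and $\depth S/(J_{G \setminus v} + (x_v, y_v))$, is then used to force $H^{n - \kappa + 2}_{\mm}(S/J_G) \ne 0$.

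The main obstacle is this inductive cohomological step: passing from $G$ to $G_v$ or $G \setminus v$ can modify $\kappa$ in either direction, so the inductive hypothesis does not plug in cleanly unless $v$ is chosen with care—preferably lying in several minimum cuts and such that $\kappa(G_v), \kappa(G \setminus v) \ge \kappa - 1$—so that the non-vanishing of the relevant local cohomology module propagates through the long exact sequence in the correct cohomological degree.
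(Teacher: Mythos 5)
This statement is quoted from \cite[Theorem 3.19]{BNB}; the paper offers no proof of it, so your attempt has to stand on its own, and as it stands it has a genuine gap. The first part is correct but only covers a special case: the inequality $\depth S/J_G\leq \dim S/P$ for $P\in\Ass(S/J_G)$, together with $J_G=\bigcap_{T\in\cC(G)}P_T$ and $\dim S/P_T=n-|T|+c(T)$, gives $\depth S/J_G\leq\min_{T\in\cC(G)}\{n-|T|+c(T)\}$, and this is $\leq n-\kappa(G)+2$ only when some cutset (e.g.\ a minimum separator) leaves exactly two components. Your observation that every vertex of a minimum separator $T_0$ has a neighbour in each component, so $T_0\in\cC(G)$, is fine, but when every minimum cut has $c\geq 3$ no associated prime yields the bound --- your own $K_{3,3}$ example shows the minimum of $n-|T|+c(T)$ can strictly exceed $n-\kappa(G)+2$ --- so the associated-prime route cannot prove the theorem in general.

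For that general case you only offer a plan, not an argument. The induction is never set up: there is no precise inductive statement, no base case, and no proof that one can always choose $v\in T_0$ with $\kappa(G_v),\kappa(G\setminus\{v\})\geq\kappa(G)-1$ in a way compatible with the induction (you explicitly name this as ``the main obstacle'', i.e.\ it is left unresolved). Moreover, even granting such a choice, non-vanishing of $H^{n-\kappa(G)+2}$ for the outer terms of the long exact sequence attached to \eqref{eq:GeneralMayerVietoris} does not by itself force $H^{n-\kappa(G)+2}(S/J_G)\neq 0$: the connecting homomorphisms can kill the relevant classes, which is exactly the phenomenon the paper points out in the example following Theorem \ref{1:BlockStar_DefModDescr+SCM}, where all terms of the analogous sequence are nonzero and no information about $H^i(S/J_G)$ can be extracted. (One minor point that does work in your favour: a vertex of a minimum cut of a non-complete graph cannot be a free vertex, since it has neighbours in two different components, so Ohtani's decomposition $J_G=J_{G_v}\cap(J_{G\setminus\{v\}}+(x_v,y_v))$ and hence \eqref{eq:GeneralMayerVietoris} are indeed applicable.) The missing cohomological induction is the actual content of the theorem in \cite{BNB}, so the proposal is an outline with a correct special case rather than a proof.
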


The next Proposition characterizes all {\SCM} graphs with only one non--empty cutset.

\begin{Proposition}\label{prop:onecut}
	Let $G$ be a graph such that $\cC(G)=\{\emptyset, T\}$, let $t=|T|$ and let $c=c(T)$ denote the number of connected components of $G\setminus T$.	Then $\depth S/J_G=n-t+2$, and $J_G$ is {\SCM} in the following cases:
	\begin{enumerate}[label={\normalfont(\roman*)}]
		\item $t=1$;
		\item $t\geq 2$ and $c=2$;
	\end{enumerate}
\end{Proposition}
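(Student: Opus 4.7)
The plan is to begin by writing down the primary decomposition of $J_G$ induced by $\cC(G)=\{\emptyset,T\}$: namely $J_G=P_\emptyset\cap P_T$, where $P_\emptyset=J_{K_n}$ has dimension $n+1$, and $P_T=\sum_{i=1}^{c}J_{K_{V_i}}+(x_v,y_v:v\in T)$ has dimension $n+c-t$, with $V_1,\ldots,V_c$ the connected components of $G\setminus T$. Both $S/P_\emptyset$ and $S/P_T$ are {\CM}: the first since $K_n$ is {\CM}, the second since, after killing the variables indexed by $T$, one is left with the binomial edge ideal of a disjoint union of complete graphs. The same argument shows $S/(P_\emptyset+P_T)=S/(J_{K_n}+(x_v,y_v:v\in T))$ is {\CM} of dimension $n-t+1$.

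For the depth equality, I would apply the Mayer--Vietoris sequence
$$0\to S/J_G\to S/P_\emptyset\oplus S/P_T\to S/(P_\emptyset+P_T)\to 0$$
together with the Depth Lemma to obtain $\depth S/J_G\geq\min(n+1,\,n+c-t,\,n-t+2)$. A short case check (distinguishing $t=1$ from $t\geq 2$ and using $c\geq 2$) shows this minimum equals $n-t+2$. For the reverse inequality, every minimum vertex cut of $G$ is automatically minimal and hence a cutset in the sense of the paper, so $T$ is the unique minimum vertex cut and $\kappa(G)=t$. Theorem \ref{thm:Banerjee-Depth-Connectivity} then yields $\depth S/J_G\leq n-t+2$, giving equality.

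For the {\SCM} statement, I would invoke Lemma \ref{lem:DG} and analyze each case. In case (i), $t=1$: if $c=2$, then $J_G$ is unmixed of dimension $n+1$ and the depth equality shows $S/J_G$ is {\CM}, hence {\SCM}; if $c\geq 3$, then $\dD(J_G)=\{n+1,\,n+c-1\}$ and a direct check gives $J_G^{<n>}=J_G$ (depth $n+1$) and $J_G^{<n+c-2>}=P_T$ (depth $n+c-1$ by the {\CM}ness of $S/P_T$). In case (ii), $t\geq 2$ and $c=2$: $\dD(J_G)=\{n-t+2,\,n+1\}$, and one checks $J_G^{<n-t+1>}=J_G$ (depth $n-t+2$) and $J_G^{<n>}=P_\emptyset=J_{K_n}$ (depth $n+1$). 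In both cases the criterion of Lemma \ref{lem:DG} is met.

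The main obstacle is the case-by-case bookkeeping for Goodarzi's filtration: one must carefully identify which associated primes survive in each $J_G^{<d_j-1>}$ and, in the depth lower bound, keep track of the subcases according to the relative sizes of $t$ and $c$, even though the minimum always ends up being $n-t+2$. Once these elementary but delicate verifications are in place, the result follows from the {\CM}ness of $S/P_\emptyset$ and $S/P_T$ together with the depth equality, without any further new ingredient.
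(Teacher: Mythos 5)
Your proposal is correct and follows essentially the same route as the paper: the decomposition $J_G=P_\emptyset\cap P_T$, the Mayer--Vietoris sequence with the Depth Lemma for the lower bound, Theorem \ref{thm:Banerjee-Depth-Connectivity} (via $\kappa(G)=t$) for the upper bound, and Goodarzi's filtration for sequential Cohen--Macaulayness. The only cosmetic difference is that you verify the criterion case by case through Lemma \ref{lem:DG}, whereas the paper checks directly which prime survives in $J_G^{<n-t+2>}$; the content is the same.
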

\begin{proof}
	From $\cC(G)=\{\emptyset, T\}$ we have
	\[
	J_G=P_{\emptyset} \cap P_T.
	\]
	We recall that $P_\emptyset=J_{K_n}$ and we observe that since $(\{x_v,y_v\}_{v \in T}) \subset P_T$, then $P_\emptyset+P_T=J_{K_{n-t}}+(\{x_v,y_v\}_{v \in T})$.
	Let $X_T=\{x_1,\ldots,x_n,y_1 \ldots y_n\} \setminus \{\{x_v,y_v\} : v \in T\}$.
	Hence, we have the Mayer-Vietoris sequence 
	\[
	0 \to S/J_G \to S/{J_{K_n}} \dirsum S/{P_T} \to {K}[X_T]/J_{K_{n-t}} \to 0.
	\]
	In particular, from Depth Lemma
	\begin{align*}
	\depth S/J_G & \geq \min\{n+1,n-t+c,(n-t+1)+1\}\\
				& = n-t+2, 
	\end{align*}
	because $t\geq 1$ and $c\geq 2$. Since $G$ has connectivity $t$, from Theorem \ref{thm:Banerjee-Depth-Connectivity} we have
	\[
	\depth S/J_G \leq n-t+2, 
	\] 
	thus $\depth S/J_G = n-t+2$. Therefore, $J_G$ is {\SCM} if for $i\geq n-t+2$ we have $J_G^{<i>} \neq J_G$, namely
	\[
	J_G^{<i>} \in \{J_{P_T},J_{K_n}\}.
	\]
	To achieve the above condition, it is sufficient to require that $J_G^{<n-t+2>} \in \{J_{P_T},J_{K_n}\}$. To obtain 
	\[
	J_G^{<n-t+2>}=J_{P_T},
	\]
	we should have $\dim S/J_{K_n}=n+1\leq n-t+2$. 
	Similarly, to obtain 
	\[
	J_G^{<n-t+2>}=J_{K_n},	
	\] 
	we should have $n-t+c\leq n-t+2$.
	To sum up, we have the conditions
	\begin{enumerate}[label={\normalfont(\roman*)}]
		\item $n-t+2 \geq  n+1 \implies t\leq 1$,
		\item $n-t+2 \geq n-t+c \implies c\leq 2$,
	\end{enumerate}
	and the assertion follows.
\end{proof}

The next Lemma illustrates a necessary condition for the binomial edge ideal $J_G$ to be {\SCM}.

\begin{Lemma}\label{lemma:necessary-SCM}
	Let $G$ be a non--complete, connected graph on $n$ vertices. Let $\overline{T} \in \cC(G)$ such that $\dim S/P_{\overline{T}}=\m(G)$. If $J_G$ is {\SCM}, then 
	\[
	\kappa(G)-|\overline{T}| + c(\overline{T}) \leq 2,
	\]
	where $\kappa(G)$ is the connectivity of the graph $G$.
\end{Lemma}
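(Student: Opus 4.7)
The plan is to combine Lemma~\ref{lem:DG} (Goodarzi's characterization) with Theorem~\ref{thm:Banerjee-Depth-Connectivity} (the depth bound via connectivity). The key observation is that the hypothesis $\dim S/P_{\overline{T}} = \m(G)$ pins down the \emph{smallest} element of $\dD(J_G)$, and for that smallest element Goodarzi's filtration collapses trivially.

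More precisely, first I recall that the minimal primary decomposition of $J_G$ is $J_G = \Sect_{T \in \cC(G)} P_T$, so that
\[
\dD(J_G) = \{n + c(T) - |T| : T \in \cC(G)\},
\]
and its minimum is exactly $\m(G) = n + c(\overline{T}) - |\overline{T}|$. Writing $\dD(J_G) = \{d_1 < \cdots < d_\ell\}$, we have $d_1 = \m(G)$, and by the very definition of $J_G^{<i>}$ no primary component is discarded at level $d_1 - 1$; hence $J_G^{<d_1 - 1>} = J_G$.

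Next, assuming $J_G$ is {\SCM}, Lemma~\ref{lem:DG} applied to $j=1$ yields
\[
\depth S/J_G \;=\; \depth S/J_G^{<d_1 - 1>} \;=\; d_1 \;=\; \m(G) \;=\; n + c(\overline{T}) - |\overline{T}|.
\]
Since $G$ is non--complete and connected, Theorem~\ref{thm:Banerjee-Depth-Connectivity} gives $\depth S/J_G \leq n - \kappa(G) + 2$. Combining the two inequalities,
\[
n + c(\overline{T}) - |\overline{T}| \;\leq\; n - \kappa(G) + 2,
\]
which rearranges to $\kappa(G) - |\overline{T}| + c(\overline{T}) \leq 2$, as desired.

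There is essentially no obstacle here beyond checking that the two cited results are being applied correctly. The only delicate point is ensuring that $J_G^{<d_1 - 1>}$ is really equal to $J_G$, i.e.\ that no associated prime of $J_G$ has dimension strictly smaller than $\m(G)$; this is immediate from the definition of $\m(G)$ as a minimum over $\cC(G)$, provided one uses that the associated primes of $J_G$ are exactly the $P_T$ with $T \in \cC(G)$.
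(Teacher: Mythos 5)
Your proof is correct and rests on the same two ingredients as the paper's: Goodarzi's characterization of sequential Cohen--Macaulayness and the depth bound $\depth S/J_G \leq n - \kappa(G) + 2$ of Theorem~\ref{thm:Banerjee-Depth-Connectivity}, together with the observation that no primary component of $J_G$ is discarded below level $\m(G)$. The only difference is cosmetic: you argue directly via Lemma~\ref{lem:DG} (obtaining $\depth S/J_G = \m(G)$), whereas the paper argues by contradiction through Proposition~\ref{prop:SCM-char-goodarzi}.
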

\begin{proof}
	We verify the statement by contradiction. Suppose
	\[
	\kappa(G)-|\overline{T}| + c(\overline{T}) \geq 3,
	\]
	we must prove that $J_G$ is not {\SCM}.	We proceed in the following way:
	\begin{gather*}
		\kappa(G)-|\overline{T}| + c(\overline{T}) \geq 3\ \  \Implies\ \  n-|\overline{T}| + c(\overline{T}) \geq n - \kappa(G) + 3 \\ 
		{}\Implies n - |\overline{T}| + c(\overline{T}) \geq (n - \kappa(G) + 2) + 1
	\end{gather*}
	Since $\dim S/P_{\overline{T}} = n - |\overline{T}| + c(\overline{T})$, it is clear that
	\[
	J_G^{<n - \kappa(G) + 2>} = J_G.
	\]
	Hence, we obtain
	\begin{align*}
		\depth S/J_G^{<n - \kappa(G) + 2>} 	& = \depth J_G \\
											& \leq n - \kappa(G) + 2,
	\end{align*}
	which, by Proposition \ref{prop:SCM-char-goodarzi}, contradicts the fact that $J_G$ is {\SCM}.
\end{proof}

As a consequence of Lemma \ref{lemma:necessary-SCM}, we are able to prove the following:

\begin{Proposition}\label{prop:SCM-mindim_onecutvertex}
	Let $G$ be a non--complete, connected graph on $n$ vertices. Suppose $\m(G)=\dim S/P_{\emptyset}=n+1$. 
	If $J_G$ is {\SCM}, then $G$ has at least one cutpoint.
\end{Proposition}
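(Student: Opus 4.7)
The plan is to derive this as an immediate consequence of Lemma \ref{lemma:necessary-SCM}, taking the distinguished cutset $\overline{T}$ to be the empty set. The hypothesis $\m(G) = \dim S/P_{\emptyset} = n+1$ is precisely saying that $\overline{T} = \emptyset$ is a cutset achieving the minimum value in the definition of $\m(G)$, so it satisfies the hypothesis of the lemma. (Recall that $\emptyset$ is vacuously a cutset and $P_{\emptyset} = J_{K_n}$ appears in the minimal primary decomposition of $J_G$; here $|\emptyset| = 0$ and $c(\emptyset) = 1$ since $G$ is connected.)

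Applying Lemma \ref{lemma:necessary-SCM} with $\overline{T} = \emptyset$, the SCM hypothesis on $J_G$ yields
\[
\kappa(G) - |\overline{T}| + c(\overline{T}) = \kappa(G) - 0 + 1 \leq 2,
\]
hence $\kappa(G) \leq 1$. On the other hand, $G$ is connected, so $\kappa(G) \geq 1$, and we conclude $\kappa(G) = 1$. By definition of the vertex-connectivity, this means that there exists a single vertex $v \in V(G)$ whose removal disconnects $G$, i.e., $G$ admits a cutpoint.

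There is essentially no obstacle: the statement is a one-line corollary of Lemma \ref{lemma:necessary-SCM} once one notices that $\emptyset$ itself is the cutset realizing $\m(G)$ under the hypothesis. The only care needed is to verify that $\emptyset \in \cC(G)$ and that $\dim S/P_{\emptyset} = n+1$ matches the formula $n - |\overline{T}| + c(\overline{T})$ used in the lemma; both are immediate.
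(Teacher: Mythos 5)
Your proposal is correct and coincides with the paper's own argument: the paper likewise applies Lemma \ref{lemma:necessary-SCM} with $\overline{T}=\emptyset$ (so $|\overline{T}|=0$, $c(\overline{T})=1$) to get $\kappa(G)\leq 1$, and then uses connectedness to conclude $\kappa(G)=1$, i.e.\ $G$ has a cutpoint. Your extra remark checking that $\emptyset$ is the cutset realizing $\m(G)$ is the same observation the paper makes implicitly.
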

\begin{proof}
	By Lemma \ref{lemma:necessary-SCM}, we know that
	\[
	\kappa(G)-|\overline{T}| + c(\overline{T}) \leq 2,
	\]
	where $\overline{T}=\emptyset$ and $c(\overline{T})=1$. Hence,
	\[
	\kappa(G) + 1 \leq 2\ \Implies\ \kappa(G) \leq 1.
	\]
	Since $G$ is a connected graph, necessarily $\kappa(G) = 1$ and the assertion follows.
\end{proof}

Inspired by Proposition \ref{prop:SCM-mindim_onecutvertex}, in Section \ref{sec:3}, we will study block graphs. In fact, they satisfy the hypothesis of the Proposition \ref{prop:SCM-mindim_onecutvertex} recursively, by removing cutpoints.
	
Observe that being {\SCM} and with at least a cutpoint does not imply that $m(G)=n+1$ (e.g. Example \ref{ex:SCM_onecutpoint_m(G)<n+1}).

\section{Cycles and wheels}\label{sec:2}
In this section we focus on some families of graphs with no cutpoints: we prove that cycles and wheels have {\SCM} binomial edge ideal. We recall that a module $M$ is called \emph{almost Cohen-Macaulay} if $\depth M  \geq \dim M - 1 $, and $M$ is said \emph{approximately Cohen-Macaulay} (introduced in \cite{Goto}) if and only if $M$ is almost Cohen-Macaulay and {\SCM} (see \cite[Proposition 4.5]{S}). Let $C_n$ be a cycle with $n$ vertices. We first remark that cycle graphs are not {\CM} in general if $n\geq 4$. It has been proved by Zafar in \cite{Z} that $\depth S/J_{C_n} = n$ (see \cite[Theorem 4.5]{Z}). Since, $\dim S/J_{C_n}=n+1$,  then $S/J_{C_n}$ is almost Cohen-Macaulay. In the paper \cite{Z}, Zafar also proves that cycles are approximately Cohen-Macaulay, and hence {\SCM}. Nevertheless, we provide a shorter proof for the sequential Cohen-Macaulayness, by using an elementary property of cycles. Without loss of generality, from now on, we will assume $n\geq 4$.

\begin{Lemma}\label{lemma:cycle-cutsets}
	Let $C_n$ be a cycle with $n$ vertices. Then the nonempty cutsets of $C_n$ have cardinalities at least $2$. Moreover, if $T\neq \emptyset$ is a cutset then $|T|$ is equal to the number of connected components of $G\setminus T$. 
\end{Lemma}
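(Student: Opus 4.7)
My plan is to reduce both assertions to a single structural claim: every nonempty cutset $T\subset V(C_n)$ must be an independent set in the cycle. From that, the lower bound on $|T|$ and the equality $c(T)=|T|$ fall out immediately. The core of the argument is a local analysis of each $v\in T$, comparing $c(T)$ with $c(T\setminus\{v\})$ based on how many of the two cycle-neighbors $u,w$ of $v$ lie in $T$.

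First I would dispose of the singleton case: if $T=\{v\}$, then $C_n\setminus T$ is a path on $n-1$ vertices, hence connected, so $c(T)=1=c(\emptyset)$, and the strict inequality in the cutset definition fails already for this one $v$; this shows $|T|\ge 2$ for any nonempty cutset. Next, for a general nonempty cutset $T$ and a fixed $v\in T$, I would carry out the three-case analysis: (i) if both $u,w\in T$, then in $C_n\setminus(T\setminus\{v\})$ the vertex $v$ has no neighbors available, so it is an isolated component and $c(T\setminus\{v\})=c(T)+1$; (ii) if exactly one of $u,w$ lies in $T$, adding $v$ back merely extends the component containing the non-$T$ neighbor, so $c(T\setminus\{v\})=c(T)$; (iii) if neither $u$ nor $w$ lies in $T$, then $v$ bridges the two (distinct) components of $C_n\setminus T$ containing $u$ and $w$, giving $c(T\setminus\{v\})=c(T)-1$. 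Because the cutset condition $c(T)>c(T\setminus\{v\})$ must hold for \emph{every} $v\in T$, cases (i) and (ii) are excluded. Thus every $v\in T$ has both cycle-neighbors outside $T$, i.e.\ $T$ is an independent set in $C_n$.

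Once independence is established, walking around the cycle shows that the non-$T$ vertices split into exactly $|T|$ maximal arcs, one between each consecutive pair of $T$-vertices, and these arcs are precisely the connected components of $C_n\setminus T$; hence $c(T)=|T|$, proving the second statement. The only subtle step is verifying in case (iii) that the two cycle-neighbors of $v$ really do lie in distinct components of $C_n\setminus T$, which uses that $|T|\ge 2$ forces at least one further vertex of $T$ to sit on the complementary arc of the cycle, blocking any reconnection of $u$ and $w$ through $C_n\setminus T$. Apart from that small check, the argument is purely combinatorial and requires no homological or algebraic input.
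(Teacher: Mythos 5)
Your proof is correct and takes essentially the same route as the paper: both arguments identify the nonempty cutsets of $C_n$ as exactly the independent sets of cardinality at least $2$, and then observe that removing such a $T$ leaves $|T|$ arcs (paths or isolated vertices), which are the connected components of $C_n\setminus T$. The only difference is that you spell out the local neighbor-by-neighbor case analysis that the paper leaves implicit (it simply invokes biconnectivity and asserts the characterization), so nothing further is needed.
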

\begin{proof}
	Let $V(C_n)=\{1,\ldots,n\}$, and $E(C_n)=\{\{1,2\},\{2,3\},\ldots,\{n-1,n\}\}\cup\{\{1,n\}\}$. Since $C_n$ is biconnected we need at least $2$ vertices to break the cycle. Moreover, without loss of generality we may assume that a nonempty cutset contains the vertex 1. We claim that $T=\{i_1=1,\ldots,i_r\}$ with $i_k>i_{k-1}+1$ for all $2\leq k\leq r$, and $i_r<n$. That is $T$ is a cutset if and only if it is a subset of $\{1,\ldots, n\}$ of cardinality at least $2$ and such that there are no adjacent vertices in $T$. Hence, $G\setminus T$ is a collection of $|T|$ paths and isolated vertices as stated.
\end{proof}

\begin{Proposition}\label{prop:SCM_cycle}
	Let $C_n$ be a cycle with $n$ vertices. Then $J_{C_n}$ is {\SCM}.
\end{Proposition}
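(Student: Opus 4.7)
The plan is to apply Lemma \ref{lem:DG} (Goodarzi's characterization) together with Lemma \ref{lemma:cycle-cutsets} and Zafar's depth computation. The only inputs needed are the dimensions appearing in the primary decomposition of $J_{C_n}$ and the depth of a handful of explicit ideals, both of which are essentially already on the table.

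First I would compute the set $\dD(J_{C_n})$. Recall that $J_{C_n}=\Sect_{T\in\cC(C_n)} P_T$, with $\dim S/P_T = n-|T|+c(T)$. The cutset $T=\emptyset$ contributes $P_\emptyset = J_{K_n}$, which has dimension $n+1$. By Lemma \ref{lemma:cycle-cutsets}, every nonempty cutset $T$ satisfies $c(T)=|T|$, so $\dim S/P_T = n$. Thus $\dD(J_{C_n})=\{n,n+1\}$ with $d_1=n$ and $d_2=n+1$.

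Next I would identify the Goodarzi ideals $J_{C_n}^{<d_j-1>}$ for $j=1,2$. For $j=1$ we have $d_1-1=n-1$, and every prime $P_T$ has $\dim S/P_T \geq n > n-1$, so no components are dropped and $J_{C_n}^{<n-1>} = J_{C_n}$. For $j=2$ we have $d_2-1=n$, and the only prime with $\dim S/P_T > n$ is $P_\emptyset$, so $J_{C_n}^{<n>} = P_\emptyset = J_{K_n}$.

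Finally I verify the two depth equalities required by Lemma \ref{lem:DG}. For the first, Zafar's theorem \cite[Theorem 4.5]{Z} gives $\depth S/J_{C_n} = n = d_1$. For the second, $S/J_{K_n}$ is Cohen--Macaulay of dimension $n+1$, so $\depth S/J_{C_n}^{<n>} = \depth S/J_{K_n} = n+1 = d_2$. Both conditions of Lemma \ref{lem:DG} hold, and therefore $J_{C_n}$ is {\SCM}.

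There is essentially no obstacle here: the cutset structure of a cycle is rigid enough that the Goodarzi filtration collapses to just two layers, one of which is $J_{C_n}$ itself (whose depth is already known) and the other being the Cohen--Macaulay ideal $J_{K_n}$. The only nontrivial ingredient is the depth computation for cycles, which the authors already cite from Zafar.
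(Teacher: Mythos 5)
Your proposal is correct and follows essentially the same route as the paper's proof: both use Lemma \ref{lemma:cycle-cutsets} to get $\dD(J_{C_n})=\{n,n+1\}$, identify $J_{C_n}^{<n-1>}=J_{C_n}$ and $J_{C_n}^{<n>}=P_{\emptyset}=J_{K_n}$, and conclude via Lemma \ref{lem:DG} using Zafar's $\depth S/J_{C_n}=n$ and the Cohen--Macaulayness of $S/J_{K_n}$. Your write-up just spells out the depth verifications that the paper leaves implicit.
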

\begin{proof}
	We first observe that, by Lemma \ref{lemma:cycle-cutsets}, it easily follows that
	\[
	\dim S/J_{C_n} = n+1.
	\]
	In fact, given a cutset $T$ of $C_n$, if $T=\emptyset$, then $\dim S/P_{\emptyset}=n+1$. Otherwise, if $T\neq\emptyset$, by Lemma \ref{lemma:cycle-cutsets} we have that $c(T)=\lvert T \rvert$, thus $\dim S/P_T=n-\lvert T \rvert + c(T) = n$. Therefore,
	\[
	\dim S/J_{C_n} = \max\{n,n+1\} = n+1,
	\]
	as stated.
	
	Our aim is to apply Lemma \ref{lem:DG} to prove that $S/J_{C_n}$ is {\SCM}. We observe that $\dD(J_G)=\{n,n+1\}$, and $S/J_{C_n}$ is {\SCM} if 
	\[
	\depth	S/{J_{C_n}}^{<n-1>}=n \mbox{ and } \depth	S/{J_{C_n}}^{<n>}=n+1.
	\]
	From ${J_{C_n}}^{<n-1>}=J_{C_n}$ and $S/{J_{C_n}}^{<n>}=P_{\emptyset}$, the assertion follows.
\end{proof}

In order to prove a consequence of the previous result, we recall that if $G$ is a graph on the vertex set $V(G)=[n]$ and $v\notin V(G)$, the {\em cone} of $v$ on $G$, denoted by $\Cone(v,G)$, is the graph on the vertex set $V(G)\union\{v\}$ and edge set $E(G)\union\{\{v,w\}\ :\ w\in V(G)\}$. The {\em wheel graph} is $W_n=\Cone(n+1,C_n)$. Moreover, any nonempty cutset $T$ of $W_n$ is $T' \cup \{n+1\}$ where $T'$ is a nonempty cutset of $C_n$. Hence, $c(T)=|T|-1$.
It is known that (see \cite[Corollary 3.11]{KS})
\[
\depth S/J_{W_n} = n.
\]

\begin{Corollary}\label{cor:SCM_wheel}
	Let $W_n$ be a wheel graph. Then $J_{W_n}$ is {\SCM}.
\end{Corollary}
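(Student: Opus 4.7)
The plan is to mimic the argument of Proposition \ref{prop:SCM_cycle} almost verbatim, since the same Goodarzi-style criterion (Lemma \ref{lem:DG}) applies once we know the set of dimensions $\dD(J_{W_n})$ and the overall depth of $S/J_{W_n}$. Throughout, note that $W_n$ has $n+1$ vertices, so $S$ is the polynomial ring in $2(n+1)$ variables and dimensions must be computed with this in mind.

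First I would pin down $\dD(J_{W_n})$ using the cutset description recalled just before the statement. For $T=\emptyset$ one has $\dim S/P_\emptyset=(n+1)+1=n+2$. For any nonempty cutset $T=T'\cup\{n+1\}$ (with $T'$ a nonempty cutset of $C_n$) the relation $c(T)=|T|-1$ gives
\[
\dim S/P_T=(n+1)+(|T|-1)-|T|=n.
\]
Hence $\dD(J_{W_n})=\{n,n+2\}$ and $\dim S/J_{W_n}=n+2$.

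Next, by Lemma \ref{lem:DG} it suffices to verify
\[
\depth S/J_{W_n}^{<n-1>}=n \quad\text{and}\quad \depth S/J_{W_n}^{<n+1>}=n+2.
\]
Since every associated prime of $J_{W_n}$ has quotient dimension $>n-1$, we have $J_{W_n}^{<n-1>}=J_{W_n}$, and the first equality is exactly the depth computation $\depth S/J_{W_n}=n$ cited from \cite[Corollary 3.11]{KS}. On the other hand $J_{W_n}^{<n+1>}$ keeps only those primary components whose prime has quotient dimension $>n+1$; the only such component is $P_\emptyset=J_{K_{n+1}}$, so $J_{W_n}^{<n+1>}=J_{K_{n+1}}$. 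The binomial edge ideal of a complete graph is Cohen--Macaulay of dimension $n+2$, so $\depth S/J_{K_{n+1}}=n+2$ and both required equalities hold.

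There is no serious obstacle: the combinatorics of the cutsets of $W_n$ is entirely dictated by the cutsets of $C_n$ (the only way to disconnect a wheel is to remove the hub plus a disconnecting set of the rim), and the nontrivial depth input $\depth S/J_{W_n}=n$ is imported from the literature. The only minor point to be careful about is that $W_n$ has $n+1$ vertices rather than $n$, so that $\dim S/P_\emptyset = n+2$ (not $n+1$) and the set $\dD(J_{W_n})$ has a gap of two, which is precisely why the filtration has just two nontrivial layers and the Goodarzi criterion reduces to the two depth equalities above.
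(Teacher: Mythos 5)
Your proof is correct and takes essentially the same approach as the paper: its proof of Corollary \ref{cor:SCM_wheel} just repeats the argument of Proposition \ref{prop:SCM_cycle}, computing $\dD(J_{W_n})=\{n,n+2\}$ from the cutset description of $W_n$ and applying Lemma \ref{lem:DG} with the depth input $\depth S/J_{W_n}=n$ from \cite{KS}. Your write-up merely spells out the details (the $n+1$ vertices, $J_{W_n}^{<n-1>}=J_{W_n}$ and $J_{W_n}^{<n+1>}=J_{K_{n+1}}$) that the paper leaves implicit.
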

\begin{proof}
	With a similar arguing to that of Proposition \ref{prop:SCM_cycle}, we obtain
	\[
	\dim S/J_{W_n} = \max\{n,n+2\} = n+2.
	\]
	As done in Proposition \ref{prop:SCM_cycle}, by applying Lemma \ref{lem:DG}, the assertion follows.
\end{proof}

From the proofs of Proposition \ref{prop:SCM_cycle} and Corollary \ref{cor:SCM_wheel}, it arises that $\m (C_n)=n$ and $\m(W_n)=(n+1)-1$.

The class of blocks that are (non-){\SCM} is a big one, as the following and easy example shows.
 
\begin{Example} Let $G$ be a graph with a unique non-empty cutset of cardinality greater than $1$, then $G$ is a block. By Proposition \ref{prop:onecut} and its notation, if $c=2$ we construct an infinite family of graphs that are {\SCM}. If $c>2$ we have an infinite family of graphs that are non-{\SCM}.
\end{Example}

\section{Block graphs}\label{sec:3}
In this Section we prove that all block graphs have {\SCM} binomial edge ideals. As observed in \cite[Remark 1.7]{ERT2022b}, we reduce to the study of indecomposable connected block graphs. Moreover, for the sake of simplicity we set $J_v:=J_{G_v}$ and $J_{\overline{v}}:=J_{G\setminus \{v\}}$ in the Mayer--Vietoris sequence \eqref{eq:GeneralMayerVietoris}.

\begin{Lemma}\label{NiceLemma}
	Let $v$ a cutpoint of an indecomposable connected block graph $G$ such that 
	\[
	G\setminus \{v\}= G_1\sqcup B_1\sqcup \ldots \sqcup B_r
	\]
	with $r \geq 2$, where $B_i$ are complete graphs. Then
	\[
	J_v^{<i>}+(J_{\overline{v}}+(x_v,y_v))^{<i>}=(J_{v}+(x_v,y_v))^{<i-1>}.
	\]
\end{Lemma}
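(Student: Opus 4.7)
The plan is to break the equality into a chain of ideal‑theoretic reductions: first compare the ``shifted'' filtration $(J_v+(x_v,y_v))^{<i-1>}$ with $J_v^{<i>}+(x_v,y_v)$, and then recognize the rest of the identity as an inclusion of filtrations of binomial edge ideals of the graphs $G\setminus\{v\}$ and $G_v\setminus\{v\}$ in $S'=S/(x_v,y_v)$. The reason such a clean reduction is possible is that in the indecomposable setting with $r\geq 2$, the vertex $v$ is a free vertex of $G_v$ and, moreover, no cutset of $G_v\setminus\{v\}$ can contain a vertex of any $V(B_i)\subset N_G(v)$ (such a vertex would only connect to the big clique and could never contribute to a cutset). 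This forces $\mathcal{C}(G_v)=\mathcal{C}(G_v\setminus\{v\})$ as subsets of $V(G_1)$ with matching component counts, and it gives the (one‑step, not two‑step) dimension drop
\[
\dim\bigl(P_T^{G_v\setminus\{v\}}+(x_v,y_v)\bigr)=\dim P_T^{G_v}-1,
\]
together with the identification $P_T^{G_v\setminus\{v\}}+(x_v,y_v)=P_T^{G_v}+(x_v,y_v)$.

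From here, I would observe that $(J_v+(x_v,y_v))^{<i-1>}$ is the intersection of the primes $P_T^{G_v}+(x_v,y_v)$ over those $T\in\mathcal{C}(G_v)$ with $\dim P_T^{G_v}>i$. Reducing modulo $(x_v,y_v)$ yields $J_{G_v\setminus\{v\}}^{<i-1>}$ in $S'$, and the same reduction applied to $J_v^{<i>}+(x_v,y_v)$ gives the identical ideal; since both ideals already contain $(x_v,y_v)$, they must be equal. Thus $(J_v+(x_v,y_v))^{<i-1>}=J_v^{<i>}+(x_v,y_v)$, and the lemma collapses to the inclusion $(J_{\overline v}+(x_v,y_v))^{<i>}\subseteq J_v^{<i>}+(x_v,y_v)$ (the reverse containment being immediate from $(x_v,y_v)\subseteq (J_{\overline v}+(x_v,y_v))^{<i>}$). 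Reducing this last inclusion modulo $(x_v,y_v)$ turns it into the statement
\[
J_{G\setminus\{v\}}^{<i>}\subseteq J_{G_v\setminus\{v\}}^{<i-1>}\qquad\text{in }S'.
\]

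The final step is where the block‑graph structure enters. Since the right‑hand side is a finite intersection of primes, the inclusion is equivalent to: for every $T\in\mathcal{C}(G_v\setminus\{v\})$ with $\dim P_T^{G_v\setminus\{v\}}>i-1$, there exists $T''\in\mathcal{C}(G\setminus\{v\})=\mathcal{C}(G_1)$ such that $P_{T''}^{G\setminus\{v\}}\subseteq P_T^{G_v\setminus\{v\}}$ and $\dim P_{T''}^{G\setminus\{v\}}>i$. To build $T''$ I would exploit that $T\subseteq V(G_1)$ and that $G_1$ is a block graph; taking $T''$ to be the subset of $T$ consisting of those vertices which remain cutpoints of $G_1\setminus(T\setminus\{u\})$ (equivalently, a maximal cutset of $G_1$ contained in $T$ whose components refine the $V(G_1)\setminus T$‑partition induced by $G_v\setminus\{v\}\setminus T$) gives the required $T''$. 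Containment $P_{T''}^{G\setminus\{v\}}\subseteq P_T^{G_v\setminus\{v\}}$ follows because the generators from the complete blocks $B_i$ are absorbed into the big‑clique summand of $P_T^{G_v\setminus\{v\}}$, and the binomials on $V(G_1)$ either land in $(x_u,y_u:u\in T)$ or into a component of $G_v\setminus\{v\}\setminus T$.

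The main obstacle lies precisely in this last step: verifying the component‑matching condition for the chosen $T''$ and producing the strict dimension inequality. The dimension gap uses the identities $c_{G\setminus\{v\}}(T'')=c_{G_1\setminus T''}+r$ (the $r$ extra pieces from the $B_i$) against $c_{G_v\setminus\{v\}}(T)=1+d_2(T)$ (where $d_2(T)$ counts the components of $G_1\setminus T$ avoiding $N_1\setminus T$), so a direct computation gives
\[
\dim P_{T''}^{G\setminus\{v\}}-\dim P_T^{G_v\setminus\{v\}}\geq r-1\geq 1,
\]
which is exactly where the hypothesis $r\geq 2$ is essential. The delicate point is choosing $T''$ so that both $T''\in\mathcal{C}(G_1)$ and the component refinement needed for the ideal containment hold simultaneously, which is where the block structure of $G_1$ and the fact that the big clique $N_G(v)$ only merges pieces through $N_1$ and the $V(B_i)$'s must be used carefully.
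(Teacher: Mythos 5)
Your reduction of the lemma to the single containment $(J_{\overline v}+(x_v,y_v))^{<i>}\subseteq J_v^{<i>}+(x_v,y_v)$, via the identity $(J_v+(x_v,y_v))^{<i-1>}=J_v^{<i>}+(x_v,y_v)$, matches the first half of the paper's argument, and your combinatorial preliminaries (each vertex of $\bigcup V(B_i)$ and $v$ itself is simplicial in $G_v$, hence avoided by cutsets; the one-step dimension drop; $V(B_i)\subseteq N_G(v)$, which indeed needs indecomposability) are essentially correct. One point, though, is asserted where an argument is required: ``reducing modulo $(x_v,y_v)$ gives the identical ideal'' hides the fact that the image of the intersection $J_v^{<i>}=\bigcap_{\dim S/P_T^{G_v}>i}P_T^{G_v}$ in $S'=S/(x_v,y_v)$ is a priori only \emph{contained} in the intersection of the images; you must prove $\bigl(\bigcap P_T^{G_v}\bigr)+(x_v,y_v)=\bigcap\bigl(P_T^{G_v}+(x_v,y_v)\bigr)$. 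This is true (e.g.\ because $P_T^{G_v}+(x_v,y_v)=P_T^{G_v\setminus\{v\}}+(x_v,y_v)$ and $P_T^{G_v\setminus\{v\}}\subseteq P_T^{G_v}$, so for $g$ in the right-hand side the truncation $g|_{x_v=y_v=0}$ already lies in $\bigcap P_T^{G_v}$), but it is not automatic and must be said.

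The genuine gap is the last step, which you yourself label the main obstacle: the set $T''$ you propose (the vertices of $T$ that ``remain cutpoints'', removed in one pass) need not be a cutset of $G_1$ at all, and neither the containment $P_{T''}^{G\setminus\{v\}}\subseteq P_T^{G_v\setminus\{v\}}$ nor the dimension estimate is verified for it; the ``component refinement'' condition is left unchecked. The step can in fact be closed without any delicate refinement analysis: let $T''\subseteq T$ be obtained by \emph{iteratively} deleting vertices that fail to reconnect two components of $(G\setminus\{v\})\setminus T$; this standard reduction yields $T''\in\cC(G\setminus\{v\})$ with $P_{T''}^{G\setminus\{v\}}\subseteq P_{T}^{G\setminus\{v\}}$, and $P_{T}^{G\setminus\{v\}}\subseteq P_{T}^{G_v\setminus\{v\}}$ holds trivially since $G\setminus\{v\}$ is a subgraph of $G_v\setminus\{v\}$ on the same vertex set. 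For the dimension, each deletion step does not decrease $-|U|+c_{G\setminus\{v\}}(U)$, and $c_{G\setminus\{v\}}(T)\geq c_{G_v\setminus\{v\}}(T)+r-1$ (the big clique glues the $r$ cliques $B_i$ and at most one component of $G_1\setminus T$), so $\dim S'/P_{T''}^{G\setminus\{v\}}\geq\dim S'/P_{T}^{G_v\setminus\{v\}}+r-1>i$; note that your ``direct computation'' must also track $|T|-|T''|$ and the change in $c_{G_1}$, which it currently omits. With these repairs your prime-by-prime argument goes through; it is organized differently from the paper's proof, which compares the two dimension sets after identifying $\cC(G_v)$ with $\cC(G\setminus\{v\})$, whereas your version only needs \emph{some} cutset $T''\subseteq T$ of $G\setminus\{v\}$ -- a weaker requirement, and arguably a safer one.
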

\begin{proof}
	We claim that if $G$ is an indecomposable block graph, then\linebreak $\cC(G_v)=\cC(G\setminus \{v\})$. We first observe that the inclusion $\cC(G\setminus \{v\})\subseteq \cC(G_v)$ easily holds. Thus, it suffices to show that $\cC(G_v)\subseteq\cC(G\setminus \{v\})$. Moreover, if $T$ is a cutset of $G$ that does not intersect $N_{G_1}(v)$, then $T$ is obviously a cutset of both graphs. Let $T$ be a cutset of $G_v$ and assume $T$ is not cutset of $G\setminus\{v\}$. From the above observation, $T \cap N_{G_1}(v)\neq\emptyset$ and hence there exists $u\in T \cap N_{G_1}(v)$ such that $c_{G_1}(T\setminus\{u\})=c_{G_1}(T)$. That is, $u$ is not a cutpoint, namely $u$ is free in the graph $G_1$. This implies  $N_{G}(u)=N_{G_1}(u) \cup \{v\}$ and $G$ is decomposable in $u$, a contradiction.

	Observe now that the block containing $v$ in $G_v$, namely $B_v$, is a block of $G_v$ itself, such that $|V(B_v)|\geq 4$ and having at least $3$ free vertices. Thanks to this we have that
	\[
	J_v^{<i>}+(x_v,y_v)=(J_{v}+(x_v,y_v))^{<i-1>}.
	\]
	In fact, 
	\[
	J_v^{<i>}=\bigcap_{\dim S/P_T>i} P_T=(\bigcap P'_T)+J_{F_v},
	\]
	where $F_v$ is the complete graph containing the free vertices of $B_v$, and
	\[
	J_v^{<i>}+(x_v,y_v)=(\bigcap P'_T)+J_{F_v\setminus \{v\}}=J_v^{<i-1>}.
	\]
	Hence, it is sufficient to prove that for all $i$
	\[
	J_v^{<i>}+(x_v,y_v)\supseteq (J_{\overline{v}}+(x_v,y_v))^{<i>}.
	\]
	For any $T \in \cC(G_v)=\cC(G\setminus \{v\})$ let $P_T^v$ and $P_T^{\overline{v}}$ be primes of $J_v$ and $J_{\overline{v}}$, respectively. Then
	\[
	\dim P_{T}^v=n-|T|+c_{G_v}(T) \mbox{  and   } 	\dim P_{T}^{\overline{v}}=(n-1)-|T|+c_{G\setminus \{v\}}(T).
	\]
	Observe that $c_{G\setminus \{v\}}(T)=c_{G_v}(T) + r$. Since $r\geq 2$, then 
	\[
	\dim P_{T}^{\overline{v}}=(n-1)-|T|+c_{G_v}(T) +r =\dim P_{T}^v +r-1.
	\]
	Let $\dD(J_{G_v})=\{d_0,\ldots,d_l\}$ and $\dD(J_{G\setminus \{v\}})=\{\overline{d}_0,\ldots,\overline{d}_l\}$ where $d_0=\dim P_{\emptyset}=n+1$. We have proved that $d_k=\overline{d}_k-r+1$.
	we show that for any $k$ 
	\[
	J_v^{<d_k-1>}+(x_v,y_v)\supseteq (J_{\overline{v}}+(x_v,y_v))^{<d_k-1>}.
	\]
	
	If $k=0$, and $d_{0}-1 < \overline{d}_{0}-1 $, then
	\[
	J_v^{<d_0-1>}+(x_v,y_v)=J_v+(x_v,y_v)\supseteq (J_{\overline{v}}+(x_v,y_v))^{<d_0-1>}=J_{\overline{v}}+(x_v,y_v)
	\]
	because there is no dimension shift, {\em i.e.} the filtered ideals $(J_{\overline{v}}+(x_v,y_v))^{<i>}$, for $i=0,1,\ldots,d_0-1$, coincide with $J_{\overline{v}}+(x_v,y_v)$ itself. 

	Let $0<k \leq l$, we observe that since $G\setminus v$ is not connected then
	\[
	(J_{\overline{v}}+(x_v,y_v))^{<\overline{d}_k-1>}=\left(\Sect_{\dim S/P'_T\geq \overline{d}_k-r} P'_T\right)+\sum_{i=1}^r J_{B_i}+(x_v,y_v).
	\]
	Since $J_{F_v}$ is the binomial edge ideal induced by the free vertices of the block containing $v$ in $G_v$, then
	\[
	J_{v}^{<d_k-1>}=\Sect_{\dim S/P_T>d_{k}} P_T=\left(\Sect_{\dim S/P'_T\geq d_k-1} P'_T\right)+ J_{F_v}.
	\]
	Since $\sum_{i=1}^r J_{B_i}\subset J_{F_v}$ and $\overline{d}_{k}-r=d_k-1$, we have 
	\[
	J_v^{<d_k-1>}+(x_v,y_v)\supseteq (J_{\overline{v}}+(x_v,y_v))^{<\overline{d}_k-1>} \supseteq (J_{\overline{v}}+(x_v,y_v))^{<d_k-1>}
	\]	
	and the assertion follows.
\end{proof}

Whilst Lemma \ref{NiceLemma} holds for block graphs, it is not true in general, as showed in the next example.

\begin{Example}
	Let $G$ be the graph on $7$ vertices obtained by attaching a vertex to the complete bipartite graph $K_{2,4}$ as shown in the figure below.
	\begin{figure}[H]
		\centering
		\begin{tikzpicture}[scale=.9,vertices/.style={draw,black,fill=black,circle,inner sep=1pt}]
			\foreach \i in {1,2}{%
				\node[vertices] (\i) at (\i+3,0) {};
				\node at (\i+3,10pt) {$\i$}; 
			}
			\foreach \j in {3,4,5,6}{%
				\node[vertices] (\j) at (\j,-1.5cm) {};
				\node at (\j,-1.5cm-10pt) {$\j$}; 
			}
			\node[vertices] (7) at (7,-0.5cm) {};
			\node[above right] at (7) {$7$};
			
			\foreach \i in {1,2}{%
				\foreach \j in {3,4,5,6}{%
					\draw (\i)--(\j);
				}
			}
			\draw (6)--(7);
		\end{tikzpicture}
		\label{fig:countex}\caption{The graph $K_{2,4}$ with a whisker}
	\end{figure}
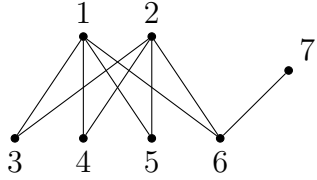
	
	Such a graph has the cutpoint $6$. By a direct computation, using {\tt Macaulay2} \cite{GDS}, one can show that
	\[
	J_{G_{6}}^{<7>} + (J_{G\setminus\{6\}}+(x_6,y_6))^{<7>} \neq (J_{G_{6}\setminus\{6\}}+(x_6,y_6))^{<6>}.
	\]
\end{Example}

We are now in position to prove the main result.

\begin{Theorem}
	Let $G$ be an indecomposable, connected block graph. Then $J_G$ is {\SCM}.
\end{Theorem}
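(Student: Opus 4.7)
My plan is to induct on the pair $(|V(G)|, c(G))$ in the lexicographic order, where $c(G)$ denotes the number of cut-points of $G$. To make the recursion close, I would in fact prove the slightly stronger claim that every block graph has {\SCM} binomial edge ideal, reducing the decomposable and disconnected cases to indecomposable connected ones via \cite[Remark 1.7]{ERT2022b}. The base case $c(G)=0$ forces $G$ to be a single complete graph, which is {\CM} and hence {\SCM}.

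For the inductive step, I would first locate a convenient cut-point. Since $G$ is indecomposable, every cut-point belongs to at least three blocks; rooting the block tree and taking a cut-point $v$ of maximum depth exhibits $v$ with at least two leaf-block neighbors. Hence $G\setminus\{v\} = G_1 \sqcup B_1 \sqcup \cdots \sqcup B_r$ with $r\geq 2$ and each $B_i$ complete, which is precisely the hypothesis needed to invoke Lemma~\ref{NiceLemma}.

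I then apply the dimension-filtration operator $(-)^{<i>}$ to the Mayer--Vietoris sequence \eqref{eq:GeneralMayerVietoris} at $v$. The leftmost identification $J_G^{<i>} = J_v^{<i>}\cap (J_{\overline{v}}+(x_v,y_v))^{<i>}$ follows from the fact that $J_G = J_v\cap(J_{\overline{v}}+(x_v,y_v))$ partitions the associated primes of $J_G$ across the two pieces according to whether they contain $v$ or not, while the rightmost term picks up the dimension shift of Lemma~\ref{NiceLemma}, producing
\begin{equation*}
0 \to S/J_G^{<i>} \to S/J_v^{<i>} \oplus S/(J_{\overline{v}}+(x_v,y_v))^{<i>} \to S/(J_{G_v\setminus\{v\}}+(x_v,y_v))^{<i-1>} \to 0.
\end{equation*}
The graph $G_v$ is a block graph on the same vertex set as $G$ but with one fewer cut-point (the vertex $v$ is absorbed into a single block of $G_v$ and is hence no longer a cut-point), while $G\setminus\{v\}$ and $G_v\setminus\{v\}$ are block graphs on strictly fewer vertices. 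All three right-hand terms therefore come from block graphs that are strictly smaller in the lex measure, so by induction together with Proposition~\ref{prop:SCM-char-goodarzi} one obtains the depth estimates $\depth S/J_v^{<i>}\geq i+1$, $\depth S/(J_{\overline{v}}+(x_v,y_v))^{<i>}\geq i+1$, and $\depth S/(J_{G_v\setminus\{v\}}+(x_v,y_v))^{<i-1>}\geq i$. The Depth Lemma applied to the filtered sequence then yields $\depth S/J_G^{<i>}\geq i+1$ for every $0\leq i<\dim S/J_G$, and a final appeal to Proposition~\ref{prop:SCM-char-goodarzi} concludes that $J_G$ is {\SCM}.

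The main obstacle, in my view, is rigorously verifying the exactness of the filtered Mayer--Vietoris sequence. Lemma~\ref{NiceLemma} is tailored to deliver the surjectivity of the rightmost map with the correct dimension shift, but matching this with the intersection identity $J_G^{<i>} = J_v^{<i>}\cap(J_{\overline{v}}+(x_v,y_v))^{<i>}$ demands careful accounting of which cut-sets contribute to which primary components of $J_G$; this step is where the block-graph hypothesis is indispensable, and it is exactly the kind of degeneracy that fails in the counterexample exhibited for $K_{2,4}$ with a whisker.
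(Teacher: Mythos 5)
Your proposal is correct and follows essentially the same route as the paper: induction via a cutpoint, Goodarzi's criterion (Proposition \ref{prop:SCM-char-goodarzi}), the Mayer--Vietoris sequence filtered by $(-)^{<i>}$ with Lemma \ref{NiceLemma} identifying the right-hand term, and the Depth Lemma. The only deviations are cosmetic: you make explicit the choice of a deepest cutpoint $v$ with at least two leaf blocks (a hypothesis of Lemma \ref{NiceLemma} that the paper's proof leaves implicit), and you bound the middle term by applying the induction to $G\setminus\{v\}$ directly, where the paper instead compares $\depth S/J_v^{<i>}$ with $\depth S/(J_{\overline{v}}+(x_v,y_v))^{<i>}$.
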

\begin{proof}
	Our aim is to apply Goodarzi's characterization (see \cite[Proposition 2.6]{G}) to $S/J_G$, proving that for all $0 \leq i < d=\dim S/J_G$,
	\[
	\depth (S/J_{G}^{<i>}) \geq i+1.
	\]
	
	We proceed by induction on the number $r$ of the cutpoints of $G$. If $r=1$, then
	$G$ is a block star graph. Let $v$ be the unique cutpoint of $G$ and let $c_v=c\left(G\setminus \{v\}\right)$. Hence,
	\[
	J_G = J_{v} \sect (J_{\overline{v}} + (x_v,y_v)),
	\]
	and $d=\dim S/J_G = \dim S/J_{\overline{v}} = n + c_v - 1$. Moreover, it is clear that
	\[
	J_G^{<i>} = \begin{cases}
				J_G &\quad \text{if $0\leq i\leq n$},\\
				(J_{\overline{v}} + (x_v,y_v)) &\quad \text{if $n < i < d$},\\
				S &\quad \text{if $i=d$}.
				\end{cases}
	\]
	Therefore, if $0\leq i\leq n$, we have
	\[
	\depth(S/J_G^{<i>}) = n + 1 \geq i+1.
	\]
	Otherwise, if $n < i < d$
	\begin{align*}
	\depth(S/J_G^{<i>}) & = \depth(S/(J_{\overline{v}} + (x_v,y_v))) \\
						& = \dim(S/(J_{\overline{v}} + (x_v,y_v))) \\
						& = d \\
						& \geq i+1,
	\end{align*}
	and this proves the base case.
	
	Suppose now $r>1$ and assume the statement true for $r-1$. Consider the Mayer–Vietoris sequence
	\begin{equation*}
		0 \to S/J_G \to S/{J_v} \dirsum S/{J_{\overline{v}}} \to S/(J_v + (J_{\overline{v}}+(x_v,y_v))) \to 0.
	\end{equation*}
	Fix now $i$ such that $0 \leq i < d$, the above sequence induces the following sequence on the filtered ideals:
	\begin{equation}\label{eq:MayerVietorisFiltered}
		0 \to S/{J_G^{<i>}} \to S/{J_v^{<i>}} \dirsum S/({J_{\overline{v}}}+(x_v,y_v))^{<i>} \to S/(J_v^{<i>} + (J_{\overline{v}}+(x_v,y_v))^{<i>}).
	\end{equation}
	In order to apply the Depth Lemma to sequence \eqref{eq:MayerVietorisFiltered}, we express the ideal\linebreak $J_v^{<i>} + (J_{\overline{v}}+(x_v,y_v))^{<i>}$ in a more suitable form. By Lemma \ref{NiceLemma}, we know that
	\begin{equation*}
		J_v^{<i>} + (J_{\overline{v}}+(x_v,y_v))^{<i>} = (J_v + (x_v,y_v))^{<i-1>},
	\end{equation*}
	hence sequence \eqref{eq:MayerVietorisFiltered} becomes
	\begin{equation}\label{eq:MayerVietorisFilteredAfterLemma}
		0 \to S/{J_G^{<i>}} \to S/{J_v^{<i>}} \dirsum S/({J_{\overline{v}}}+(x_v,y_v))^{<i>} \to (J_v + (x_v,y_v))^{<i-1>},
	\end{equation}	
	and, by applying the Depth Lemma to the above sequence, we obtain
	\begin{align*}
		\depth(S/J_G^{<i>}) & \geq \min\big\{\depth(S/{J_{v}^{<i>}} \dirsum S/({J_{\overline{v}}}+(x_v,y_v))^{<i>}),\\
		&\phantom{\geq\min\big\{.}\depth(S/(J_v + (x_v,y_v))^{<i-1>}) + 1 \big\}.
	\end{align*}
	Observe that $\depth S/{J_v^{<i>}} \leq \depth S/({J_{\overline{v}}}+(x_v,y_v))^{<i>}$, since $G_{v}$ is a block graph and $G\setminus \{v\}$ is tensor product of block graphs. Therefore,
	\[
	\depth(S/{J_v^{<i>}} \dirsum S/({J_{\overline{v}}}+(x_v,y_v))^{<i>}) = \depth S/{J_v^{<i>}}.
	\]
	Then
	\[
	\depth(S/J_G^{<i>}) \geq \min\big\{ \depth S/{J_v^{<i>}}, \depth(S/(J_v + (x_v,y_v))^{<i-1>}) + 1 \big\}.
	\]
	By inductive hypothesis, we have that
	\[
	\depth S/{J_v^{<i>}} \geq i+1,\qquad \depth(S/(J_v + (x_v,y_v))^{<i-1>}) \geq i,
	\]
	and the assertion follows.
\end{proof}

\section{Cone graphs}\label{sec:4}
Cone graphs have been studied by many authors (e.g. \cite{BMS2018},\cite{KS},\cite{RR}), to control the behavior of the depth and the {\CM} property.

In this section we study the binomial edge ideals of cone graphs and we specify under which conditions they are {\SCM}.

\begin{Theorem}\label{thm:cones}
	Let $G = G_1 \sqcup G_2 \sqcup \ldots \sqcup G_r$ be a graph, where $G_i$ is a connected graph for each $i=1,\ldots,r$, with $r\geq 2$. Let $v$ be a vertex such that $v\notin V(G)$ and $H=\Cone(v,G)$. Then $J_{H}$ is {\SCM} if and only if $J_{G_1},\ldots,J_{G_r}$ are {\SCM}.
\end{Theorem}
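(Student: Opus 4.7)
The plan is to exploit the fact that the apex $v$ is a cutpoint of $H$ (since $r\geq 2$) and to combine the Mayer--Vietoris sequence \eqref{eq:GeneralMayerVietoris} with the K\"unneth formula for local cohomology under tensor product over $K$. Since $N_H(v)=V(G)$, one has $H_v=K_{n+1}$ and $H\setminus\{v\}=G$, and \eqref{eq:GeneralMayerVietoris} specializes to
\[
0 \to S/J_H \to S/J_{K_{n+1}} \dirsum S/(J_G+(x_v,y_v)) \to S/(J_{K_n}+(x_v,y_v)) \to 0,
\]
where $S/J_{K_{n+1}}$ is {\CM} of dimension $n+2$ and $S/(J_{K_n}+(x_v,y_v))$ is {\CM} of dimension $n+1$.

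Set $N := S/(J_G+(x_v,y_v))$. Because the $G_i$ are vertex-disjoint and $J_G = J_{G_1}+\cdots+J_{G_r}$ uses pairwise disjoint sets of indeterminates, as $S$-modules there is an isomorphism
\[
N \isom (S_1/J_{G_1}) \tensor_K \cdots \tensor_K (S_r/J_{G_r}).
\]
The K\"unneth formula for local cohomology of tensor products of finitely generated $K$-algebras then yields
\[
H^i_\mathfrak{m}(N) \isom \Dirsum_{i_1+\cdots+i_r=i} H^{i_1}(S_1/J_{G_1}) \tensor_K \cdots \tensor_K H^{i_r}(S_r/J_{G_r}),
\]
with an analogous decomposition for the deficiency modules. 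Since a $K$-tensor product of {\CM} modules of pure dimensions $d_1,\ldots,d_r$ is {\CM} of dimension $d_1+\cdots+d_r$, Theorem \ref{1:SCM_DefModChar} gives that $N$ is {\SCM} if and only if each $S_i/J_{G_i}$ is {\SCM}.

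It then suffices to prove the equivalence $S/J_H$ is {\SCM} $\iff$ $N$ is {\SCM}. Applying local cohomology to the Mayer--Vietoris sequence and dualizing, the fact that the two outer {\CM} modules have local cohomology concentrated in degrees $n+2$ and $n+1$ yields graded isomorphisms $\omega^i(S/J_H)\isom\omega^i(N)$ for $i\notin\{n+1,n+2\}$, together with the five-term exact sequence
\begin{multline*}
0 \to \omega^{n+2}(S/J_{K_{n+1}}) \dirsum \omega^{n+2}(N) \to \omega^{n+2}(S/J_H) \to \omega^{n+1}(S/(J_{K_n}+(x_v,y_v))) \\
\to \omega^{n+1}(N) \to \omega^{n+1}(S/J_H) \to 0.
\end{multline*}
Running Theorem \ref{1:SCM_DefModChar} on both sides and splitting this sequence into short exact pieces, the equivalence follows by Depth Lemma arguments, using that the two ``corner'' deficiency modules appearing above are {\CM} of dimensions $n+2$ and $n+1$ respectively.

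The main obstacle will be the bookkeeping in the five-term sequence, especially in the ``$\Rightarrow$'' direction: from the {\CM} property of $\omega^{n+2}(S/J_H)$ one must extract the {\CM} property of $\omega^{n+2}(N)$, which appears as a direct summand of the sub-module $\omega^{n+2}(S/J_{K_{n+1}}) \dirsum \omega^{n+2}(N)$; analogously, the {\CM} property of $\omega^{n+1}(N)$ must be extracted from its role as the middle term of a short exact sequence with a {\CM} $(n+1)$-dimensional corner on one side and the quotient $\omega^{n+1}(S/J_H)$ on the other. Carefully identifying the dimensions of the intermediate subquotients (particularly the image of the connecting map $\omega^{n+2}(S/J_H) \to \omega^{n+1}(S/(J_{K_n}+(x_v,y_v)))$) is needed before the Depth Lemma can be applied in the backward direction.
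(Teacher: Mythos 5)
Your Künneth reduction of the tensor-factor statement and the five-term sequence
\begin{equation*}
0 \to \omega^{n+2}(S/J_{K_{n+1}}) \dirsum \omega^{n+2}(N) \to \omega^{n+2}(S/J_H) \xrightarrow{\ \delta\ } \omega^{n+1}(C) \to \omega^{n+1}(N) \to \omega^{n+1}(S/J_H) \to 0,
\end{equation*}
with $N=S/(J_G+(x_v,y_v))$ and $C=S/(J_{K_n}+(x_v,y_v))$, are both correct, as is the isomorphism $\omega^i(S/J_H)\isom\omega^i(N)$ for $i\notin\{n+1,n+2\}$. The gap is exactly at the step you defer to ``Depth Lemma arguments'': they do not transfer the conditions of Theorem \ref{1:SCM_DefModChar} between $S/J_H$ and $N$ in either direction. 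In the backward direction, $\omega^{n+1}(S/J_H)$ is the cokernel of the unidentified map $\omega^{n+1}(C)\to\omega^{n+1}(N)$, and a cokernel of a map between {\CM} modules of dimension $n+1$ need not be {\CM} (nor of dimension $n+1$). In the forward direction, $\omega^{n+2}(N)$ is a direct summand of $\Ker\delta$, and the Depth Lemma applied to $0\to\Ker\delta\to\omega^{n+2}(S/J_H)\to\im\delta\to 0$ only gives $\depth\Ker\delta\geq\min\{n+2,\depth\im\delta+1\}$, where $\im\delta$ is an uncontrolled submodule of the canonical module of $C$ (torsion-freeness gives at best depth $1$); similarly $\omega^{n+1}(N)$ sits in the middle of a sequence whose sub and quotient are not controlled. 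In the block star case (Theorem \ref{1:BlockStar_DefModDescr+SCM}) this kind of difficulty was resolved only because all the relevant modules were canonical modules of {\CM} rings, so the $S_2$/double-dual argument identified the map with a ring surjection and killed the offending term; here $N$ is only {\SCM}, its local cohomology is spread over degrees up to $\dim N\geq n+r\geq n+2$, and no such identification is available --- this is precisely the obstruction the paper illustrates with the tree example after Theorem \ref{1:BlockStar_DefModDescr+SCM}. So, as proposed, the argument does not close.

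For comparison, the paper avoids deficiency modules here and uses Goodarzi's criterion (Proposition \ref{prop:SCM-char-goodarzi}) on the filtered ideals: from $J_H=J_{K_{n+1}}\cap(J_G+(x_v,y_v))$ one has $J_H^{<i>}=(J_G+(x_v,y_v))^{<i>}$ for $i\geq n+2$, while for $i\leq n+1$ one writes $J_H^{<i>}=J_{B_v}\cap(J_G+(x_v,y_v))^{<i>}$ with $J_{B_v}=P_{\emptyset}\cap P_{\{v\}}$ a block star ideal, and proves the key identity $J_{B_v}+(J_G+(x_v,y_v))^{<i>}=P_{\{v\}}$, which is {\CM} of dimension $n+r\geq n+2$. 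The Depth Lemma applied to the Mayer--Vietoris sequence of these filtered ideals then yields, in both directions, $\depth S/J_H^{<i>}\geq i+1$ if and only if $\depth S/(J_G+(x_v,y_v))^{<i>}\geq i+1$, because the third term has depth at least $n+2\geq i+1$. If you want to rescue your deficiency-module route, you would have to identify the connecting maps explicitly (for instance, prove surjectivity of $H^{n+1}(N)\to H^{n+1}(C)$ and control its kernel), which in effect amounts to redoing this filtered-ideal computation.
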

\begin{proof}
	We first observe that, since $H$ is a cone graph, then
	\[
	H_{v}=K_{n+1},\qquad H \setminus\{v\}=G.
	\]
	Hence, we write
	\[
	J_H = J_{K_{n+1}} \sect (J_G + (x_v,y_v)).
	\]
	Moreover,
	\[
	S/(J_G+(x_v,y_v)) \isom S_1/J_{G_1} \otimes \cdots\otimes S_r/J_{G_r},
	\]
	where $S_i=K[x_u,y_u\ :\ u\in V(G_i)]$. Thus, $S_i/J_{G_i}$ are {\SCM} if and only if
	$S/(J_G+(x_v,y_v))$ is {\SCM}.

	Since $J_H^{<i>} = J_{K_{n+1}}^{<i>} \sect (J_G + (x_v,y_v))^{<i>}$ and $\dim S/J_{K_{n+1}} = n+2$, for $i\geq n+2$
	\[
	J_H^{<i>} = (J_G + (x_v,y_v))^{<i>}.
	\]
  	Therefore, if $i\geq n+2$, we have $\depth(S/J_{H}^{<i>})\geq i+1$ if and only if\linebreak $\depth(S/(J_G+(x_v,y_v))^{<i>})\geq i+1$.
	
	Thus, we now deal with the case $i\leq n+1$. We observe that 
	\[
	\dim S/P_{\{v\}}=n+r \geq n+2,
	\]
	thus $P_{\{v\}}$ appears in the primary decomposition of any $(J_G + (x_v,y_v))^{<i>}$ for any $i\leq n+1$. Then we write
	\[
	\begin{aligned}
	J_{H}^{<i>} & = J_{H}^{<i>} \cap P_{\{v\}} \\
				  &	= P_{\emptyset} \cap (J_{G}+(x_v,y_v))^{<i>} \cap P_{\{v\}}\\
				  & = (P_{\emptyset}\cap P_{\{v\}}) \cap (J_{G}+(x_v,y_v))^{<i>}.
	\end{aligned}
	\]
	Note that $P_\emptyset \cap P_{\{v\}}= J_{B_v}$, where $B_v$ is a block star graph with cutpoint $v$. Now, we consider the Mayer-Vietoris sequence
    \begin{equation}\label{eq:Mayer-Vietoris-Cone-i}
	0 \to S/J_H^{<i>} \to S/J_{B_v}  \dirsum S/(J_{G}+(x_v,y_v))^{<i>} \to S/J_{B_v}+(J_G+(x_v,y_v))^{<i>} \to 0.
	\end{equation}
	Since
	\[
	(J_G + (x_v,y_v))^{<i>} \subset P_{\{v\}},
	\]
    and any cutset of $H$ contains $v$, that is
    \[
 	(x_v,y_v) \subset (J_G + (x_v,y_v))^{<i>}.
    \]
    Therefore,
    \[
   	P_{\{v\}}=J_{B_v}+(x_v,y_v)\subseteq  J_{B_v}+(J_G + (x_v,y_v))^{<i>}\subseteq    J_{B_v}+P_{\{v\}} = P_{\{v\}},
    \]
    yielding $J_{B_v}+(J_G + (x_v,y_v))^{<i>}= P_{\{v\}}$.
	Hence, by the Depth Lemma applied to \eqref{eq:Mayer-Vietoris-Cone-i}, we have
	{\small \begin{align*}
	\depth(S/J_{H}^{<i>}) & \geq \min\big\{\depth(S/J_{B_v}),\depth(S/(J_G+(x_v,y_v))^{<i>}),\depth(S/P_{\{v\}})+1\big\}\\
		& = \min\big\{n+2,\depth(S/(J_G+(x_v,y_v))^{<i>}),n+r+1\big\}
	\end{align*}}
	and 
	{\small \begin{align*}
	\min\big\{\depth(S/(J_G+(x_v,y_v))^{<i>}),\depth(S/J_{B_v})\big\} & \geq \min\big\{\depth(S/J_H^{<i>}),\depth(S/P_{\{v\}})\big\}\\
		& = \min\big\{\depth(S/J_H^{<i>}),n+r\big\}
	\end{align*}}
	Since $i+1\leq n+2$, we have that 
	\[
	\depth S/{J_H^{<i>}} \geq i+1  \iff \depth S/{(J_G+(x_v,y_v))^{<i>}} \geq i+1,
	\]
	and the assertion follows.
\end{proof}

\begin{Remark}
	The hypothesis $r\geq 2$ of Theorem \ref{thm:cones} is fundamental. Indeed, even if the wheel $W_n$ is a cone on a cycle $C_n$, and they are both {\SCM}, there are examples of {\SCM} graphs $G$ such that the cone from a vertex $v$ to $G$ is not {\SCM}.
\end{Remark}

\begin{Example}
	Let $G=K_{1,3}$ be the claw graph on vertices $\{1,2,3,4\}$, then $H=\Cone(5,G)$ is such that $\cC(G)=\{ \emptyset, T=\{1,5\} \}$ and $c(T)=3$ (see Figure \ref{fig:badcone}(A),\ref{fig:badcone}(B)). From Theorem \ref{1:BlockStar_DefModDescr+SCM} $G$ is {\SCM}, while from Proposition \ref{prop:onecut}, $H$ is not {\SCM}.
	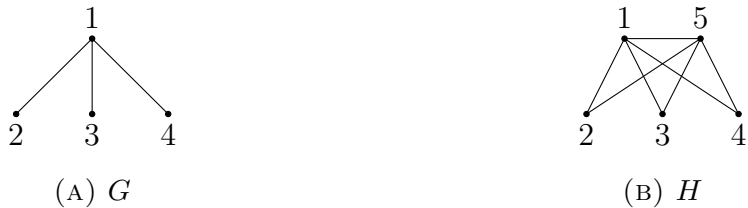
\begin{figure}[H]
		\centering
		\begin{subfigure}{0.5 \textwidth}
			\centering
			\begin{tikzpicture}
				\filldraw (0,0) circle (1pt) node [anchor=north]{2};
				\filldraw (1,0) circle (1pt) node [anchor=north]{3};
				\filldraw (2,0) circle (1pt) node [anchor=north]{4};
				\filldraw (1,1) circle (1pt) node [anchor=south]{1};
				
				\draw (1,1)--(0,0);
				\draw (1,1)--(1,0);
				\draw (1,1)--(2,0);
			\end{tikzpicture}
			\caption{$G$}
		\end{subfigure}%
		\begin{subfigure}{0.5 \textwidth}
			\centering
			\begin{tikzpicture}
				\filldraw (0,0) circle (1pt) node [anchor=north]{2};
				\filldraw (1,0) circle (1pt) node [anchor=north]{3};
				\filldraw (2,0) circle (1pt) node [anchor=north]{4};
				\filldraw (0.5,1) circle (1pt) node [anchor=south]{1};
				\filldraw (1.5,1) circle (1pt) node [anchor=south]{5};
				
				\draw (0.5,1)--(0,0);
				\draw (0.5,1)--(1,0);
				\draw (0.5,1)--(2,0);
				\draw (1.5,1)--(0,0);
				\draw (1.5,1)--(1,0);
				\draw (1.5,1)--(2,0);
				\draw (1.5,1)--(0.5,1);
			\end{tikzpicture}
			\caption{$H$}
		\end{subfigure}%
		\caption{A cone $H$ on a connected graph $G$}\label{fig:badcone}
	\end{figure}
\end{Example}

Theorem \ref{thm:cones} is also useful to construct a {\SCM} graph $G$ with a cutpoint such that $\m(G)\neq |V(G)|+1$.

\begin{Example}\label{ex:SCM_onecutpoint_m(G)<n+1}
	Let $G=\Cone(9,C_4\sqcup C_4)$ be the cone on the graph obtained as the disjoint union of two cycle $C_4$ (see Figure \ref{fig:ConeC4}). Such a graph is {\SCM} by Theorem \ref{thm:cones} and has cutpoint $9$. Nevertheless, $\m(G)=8< 10 = |V(G)|+1$.
	\begin{figure}[ht]
		\centering
		\begin{tikzpicture}[vertices/.style={draw,black,fill=black,circle,inner sep=1pt}]
			\node[vertices] (1) at (-2,1) {};
			\node[above] at (1) {$1$};
			
			\node[vertices] (2) at (-1,0) {};
			\node[above right] at (2) {$2$};
			
			\node[vertices] (3) at (-2,-1) {};
			\node[below] at (3) {$3$};
			
			\node[vertices] (4) at (-3,0) {};
			\node[above left] at (4) {$4$};
			
			\node[vertices] (5) at (2,1) {};
			\node[above] at (5) {$5$};
			
			\node[vertices] (6) at (3,0) {};
			\node[above right] at (6) {$6$};
			
			\node[vertices] (7) at (2,-1) {};
			\node[below] at (7) {$7$};
			
			\node[vertices] (8) at (1,0) {};
			\node[above left] at (8) {$8$};

			\node[vertices] (9) at (0,-.55) {};
			\node[below] at (9) {$9$};

			\draw (1)--(2)--(3)--(4)--(1)--cycle;
			\draw (5)--(6)--(7)--(8)--(5)--cycle;
			\foreach \k in {1,...,8}{%
				\draw (\k)--(9);
			}
		\end{tikzpicture}
		\caption{A cone graph on two graphs $C_4$}\label{fig:ConeC4}
	\end{figure}
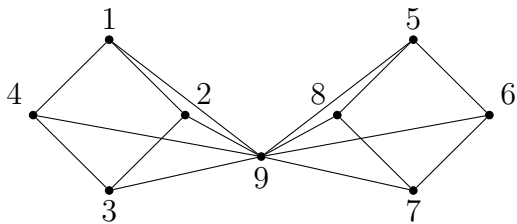
\end{Example}

All trees, and all (even) cycles have {\SCM} binomial edge ideals. Moreover, the only complete bipartite graphs are $C_4$ \cite{SZ} and trees (Section \ref{sec:3}). A complete characterization of bipartite {\CM} graphs has been done in \cite{BMS2018}. Hence, the following natural question arises.

\begin{Question}
	What are bipartite graphs with {\SCM} binomial edge ideals?
\end{Question}


\end{document}